\newcommand{\bea}{\begin{eqnarray}}
\newcommand{\eea}{\end{eqnarray}}
\newcommand{\bna}{\begin{eqnarray*}}
\newcommand{\ena}{\end{eqnarray*}}
\numberwithin{equation}{section} 
\theoremstyle{plain}
\newtheorem{theorem}{Theorem}[section]
\newtheorem{lemma}{Lemma}[section]
\newtheorem{proposition}{Proposition}[section]
\theoremstyle{definition}
\newtheorem{remark}{Remark}
\begin{document}

\title{Higher order divisor functions over values of mixed powers}

\author{Chenhao Du and Qingfeng Sun }

\date{\today}

\begin{abstract}
Let $\tau_k(n)$ be the $k$-th divisor function. In this paper,
we derive an asymptotic formula
for the sum
$$
\sum_{1\leq n_1,n_2, \dots, n_{\ell}\leq X^{\frac{1}{r}}
\atop 1\leq n_{\ell+1}\le X^{\frac{1}{s}}}\tau_k(n_1^r+n_2^r+\dots +n_{\ell}^r+n_{\ell+1}^s),
$$
where $k\geq 4$, $r\geq 2$, $s\geq 2$
and $\ell\geq 2$ are integers.
Previously only special cases are studied.
\end{abstract}

\thanks{Q. Sun was partially
  supported by the National Natural Science Foundation
  of China (Grant Nos. 11871306 and 12031008) and the Natural
  Science Foundation of Shandong Province (Grant No. ZR2023MA003)}

\keywords{$k$-th divisor function, divisor problem, mixed powers}
\maketitle

\section{Introduction}

Let $\tau_k(n)$ be the $k$-th divisor function
which counts the number of representations of $n$ as the product of
$k$ positive integers,
i.e.,
$$
\tau_k(n)=\left\{(n_1,\ldots,n_k)\in \mathbb{N}^k|n_1\cdots n_k=n\right\},
$$
so its Dirichlet series is $\zeta^{k}(z)$.
When $k=2$, $\tau_2(n)=\tau(n)$ is the well-known divisor function, which is also
one of the most prominent arithmetic functions.
If $n=p_1^{\alpha_1}\cdots p_t^{\alpha_t}$, then
$$\tau_k(n)=\left(\begin{array}{c}
                    \alpha_1+k-1 \\
                    k-1
                  \end{array}\right)\cdots\left(\begin{array}{c}
                    \alpha_t+k-1 \\
                    k-1
                  \end{array}\right).$$
Thus the values of a single $\tau_k(n)$ can fluctuate as $n$ varies.
Although individual values does not bring us much interesting information,
the average behaviour of the divisor function is reasonably stable and attractive,
as the charm demonstrated by the celebrated theorem of Dirichlet
$$
\sum_{n\leq X}\tau(n)=X\log X+(2\gamma-1)X+O(\sqrt{X}),
$$
where $\gamma$ is Euler's constant.
Now there is even stunningly beautiful asymptotic formula for the
Titchmarsh divisor problem (see Titchmarsh \cite{T}, Fouvry \cite{F} and
Bombieri, Friedlander and Iwaniec \cite{BFI})
$$
\sum_{p\leq X\atop p \,\rm{prime}}\tau(p+1)= \frac{\zeta(2) \zeta(3)}{
 \zeta(6)} X+c \cdot \mathrm{Li}(X)+O(X(\log X)^{-A})
$$
for some constant $c$ and any $A>0$, where $\mathrm{Li}(X)$ denotes the logarithmic integral.
In order to explore the statistical laws of the divisor functions
at various levels deeply, the mean values of $\tau_k(n)$
over different sparse sequences haves been studied by many authors.
To describe the historical results, let
$F({\bf x})\in \mathbb{Z}[x_1,\ldots,x_{\ell}]$ be an $\ell$-variable nonsingular polynomial.
Consider the sum
$$
\sum_{{\bf x}\in X\mathcal{B}\bigcap \mathbb{N}^{\ell}}\tau_k(F({\bf x}))
$$
for $X\rightarrow \infty$, where $\mathcal{B}\subset \mathbb{R}^{\ell}$
is an $\ell$-dimensional box such that
$\min_{x\in \in X\mathcal{B}} F({\bf x}) \geq 0$ for all sufficiently large $X$.
We would like to recall the following special cases
that have been studied in the literature:
\begin{itemize}
  \item  $k=2$, $F({\bf x})=x_1^2+a$, $-a$ is not a perfect square number: Hooley \cite{Hooley};

\medskip

  \item  $k=2$, $F({\bf x})=x_1^2+x_2^2$: Gafurov \cite{G}, Yu \cite{Yu};

\medskip

  \item $k=2$, $F({\bf x})=x_1^2+x_2^2+x_3^3$: Calder\'{o}n and Velasco \cite{CV},
  Guo and Zhai \cite{GZ}, Zhao \cite{Z};

\medskip

  \item $k=2$, $r\geq 3$, $F({\bf x})=x_1^2+x_2^2+x_3^r$: L\"{u} and Mu \cite{LM};

  \medskip

  \item $k=2$, $r\geq 2$, $\ell\geq 3$, $F({\bf x})=x_1^r+\cdots+x_{\ell}^r$:
Zhang \cite{Zangmeng} ;

 \medskip

  \item  $k=3$, $F({\bf x})=x_1^2+x_2^6$, $(x_1,x_2)=1$: Friedlander and Iwaniec \cite{FI};

\medskip

  \item $k=3$, $F({\bf x})=x_1^2+x_2^2+x_3^2$: Sun and Zhang \cite{SZ};

\medskip

  \item $k=3$, $r\geq 3$, $F({\bf x})=x_1^2+x_2^2+x_3^r$: Zhou and Hu \cite{ZH};

\medskip

  \item $k=3$, $\ell\geq 3$, $F({\bf x})=x_1^2+\cdots+x_{\ell}^2$:
  Hu and Hu \cite{HH} ;

\medskip

  \item $k\geq 4$, $\ell\geq 3$, $F({\bf x})=x_1^2+\cdots+x_{\ell}^2$:
  Hu and L{\"u} \cite{HL};

\medskip

  \item $k\geq 4$, $r\geq 2$, $\ell>2^{r-1}$, $F({\bf x})=x_1^r+\cdots+x_{\ell}^r$:  Zhou and Ding \cite{ZD};
\end{itemize}
More cases and relevant developments can be found in Daniel \cite{D}, Tipu \cite{Tipu},
Blomer \cite{B1},\cite{B2},
Bret\`{e}che and Browning \cite{Br1},
Browning \cite{Br2},
Hu and Yang \cite{Hu3}, Liu \cite{Liu},
Lapkova and Zhou \cite{LZ}, Zhao, Zhai and Li \cite{ZZL} and the references therein.

In this paper, we are concerned with the following sum of higher order
divisor function over values of mixed powers
  \bna
\sum_{1\leq n_1,n_2, \dots ,n_{\ell}
\leq X^{\frac{1}{r}}\atop 1\leq n_{\ell+1}\le X^{\frac{1}{2}}}
\tau_k(n_1^r+n_2^r+\dots +n_{\ell}^r+n_{\ell+1}^s),
\ena
where $k \geq 4$, $r \geq 2$, $s\geq 2$ and $\ell\geq 2^{r-1}$.

To state our results, we introduce some notations.
Let $G_r(a,b;q)$ be the Gauss sum
\bea\label{Gauss sum}
G_r(a,b;q)=\sum_{x\bmod q}e\Big(\frac{ax^r+bx}{q}\Big).
\eea
For $0\le j\le k-1$, we define
\bea\label{Aj1}
A_j(q)=\sum_{b=1}^{q}e\left(-\frac{ab}{q}\right)c_{j+1}(b,q),
\eea
where $a>0$ is an integer. The coefficients $c_j(b,q)$ are sums of terms of the form
\bna
\sum_{b_1b_2 \equiv b\bmod q }f(b_1)
\ena
for some function $f$. The number of terms in $c_{j}(b,q)$ depends only on $k$.
The accurate definition of $A_j(q)$ is a bit cumbersome and we do not need it in
the proof. So we omit its definition and recommend the reader to (2.13) in \cite{C1} for the
details. For our purpose, we only note that
(see (4.8) in \cite{C2})
\bea\label{Aj2}
A_j(q)\ll_{k}q^{-1}
\eea
and $A_j(q)$ is independent of $a$.
For  $0 \leq i \leq j \leq k-1$ , we define
\bea\label{S}
\mathfrak{S}_{k, r,s, \ell, j}=\frac{1}{j!} \sum_{\substack{q=1}}^{\infty}
\sum_{1\le a \le q \atop (a,q)=1} \frac{G_r^{\ell}(a,0;q)G_s(a,0;q)}{q^{\ell+1}} A_{j}(q)
\eea
and
\bea\label{J}
\mathfrak{J}_{r,s, \ell, i}=\int_{-\infty}^{\infty}
\left(\int_{0}^{\ell+1} e(-\beta u_1) \log ^{i} u_1 d u_1\right)
\left(\int_{0}^{1} e\left(u_2^{r} \beta\right) \mathrm{d} u_2\right)^{\ell}
\left(\int_{0}^{1} e\left(u_3^{s} \beta\right) \mathrm{d} u_3\right)\mathrm{d} \beta.
\eea
The main result of this paper is the following asymptotic formula.
\begin{theorem}\label{Thm}Let $k \geq 4$, $r \geq 2$,
$s\geq 2$ and $\ell\geq  2^{r-1}$  be integers. There is a constant
$\delta_{k, r,s, \ell}>0$ such that for any $\varepsilon>0$
\bna
&&\sum_{1\leq n_1,n_2, \dots n_{\ell}\leq X^{\frac{1}{r}}\atop 1\leq n_{\ell+1}\le X^{\frac{1}{s}}}\tau_k(n_1^r+n_2^r+\dots +n_{\ell}^r+n_{\ell+1}^s)\nonumber \\
&= & \sum_{j=0}^{k-1} \mathfrak{S}_{k, r,s, \ell, j}
\sum_{i=0}^{j}\binom{j}{i} \mathfrak{J}_{r,s, \ell, i}
X^{\frac{\ell}{r}+\frac{1}{s}}(\log X)^{j-i}
+O\left(X^{\frac{\ell}{r}+\frac{1}{s}-\delta_{k, r,s, \ell}+\varepsilon}\right),
\ena
where $\mathfrak{S}_{k, r,s, \ell, j}$ and
$\mathfrak{J}_{r,s, \ell, i}$ are defined in \eqref{S} and \eqref{J}, respectively.
Here the exact values of $\delta_{k, r,s, \ell}$ can be found in Section \ref{delta}.
\end{theorem}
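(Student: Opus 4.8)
The plan is to treat the sum by the Hardy--Littlewood circle method, taking as the main arithmetic ingredient a known asymptotic expansion for $\tau_k$ in arithmetic progressions. Write $e(z)=e^{2\pi iz}$ and set
\[
\mathfrak{f}(\alpha)=\sum_{1\le n\le X^{1/r}}e(\alpha n^{r}),\qquad
\mathfrak{g}(\alpha)=\sum_{1\le n\le X^{1/s}}e(\alpha n^{s}),\qquad
T(\alpha)=\sum_{1\le N\le(\ell+1)X}\tau_{k}(N)e(-N\alpha).
\]
By orthogonality the sum in Theorem \ref{Thm} equals $\int_{0}^{1}\mathfrak{f}(\alpha)^{\ell}\mathfrak{g}(\alpha)T(\alpha)\,d\alpha$. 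Fix $Q=X^{\eta}$ for a small $\eta>0$ to be chosen at the very end, let $\mathfrak{M}$ be the union of the arcs $\{\alpha:|\alpha-a/q|\le QX^{-1}\}$ over $1\le a\le q\le Q$ with $(a,q)=1$, and let $\mathfrak{m}$ be the complementary set of minor arcs.

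On $\mathfrak{m}$ I would bound $\int_{\mathfrak{m}}\mathfrak{f}^{\ell}\mathfrak{g}T\,d\alpha$ by factoring out $\sup_{\mathfrak{m}}|\mathfrak{g}|$ and $\ell-2^{r-1}$ copies of $\sup_{\mathfrak{m}}|\mathfrak{f}|$ (each handled by Weyl's inequality, which for $\alpha\in\mathfrak{m}$ gives a saving of a fixed power of $Q$), and then estimating the leftover $\int_{0}^{1}|\mathfrak{f}|^{2^{r-1}}|T|\,d\alpha$ by Cauchy--Schwarz against Hua's inequality $\int_{0}^{1}|\mathfrak{f}|^{2^{r}}\,d\alpha\ll X^{2^{r}/r-1+\varepsilon}$ and the classical mean square bound $\int_{0}^{1}|T|^{2}\,d\alpha=\sum_{N\le(\ell+1)X}\tau_{k}(N)^{2}\ll X(\log X)^{k^{2}-1}$. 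The hypothesis $\ell\ge 2^{r-1}$ enters exactly here, guaranteeing $\ell-2^{r-1}\ge0$ and enough $r$-th powers for Hua's inequality; the net result is a minor-arc contribution $\ll X^{\ell/r+1/s-\delta_{1}(\eta)+\varepsilon}$ with $\delta_{1}(\eta)>0$.

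On $\mathfrak{M}$, with $\alpha=a/q+\beta$, I would approximate $\mathfrak{f}(a/q+\beta)$ by $q^{-1}G_{r}(a,0;q)v(\beta)$, where $v(\beta)=\int_{0}^{X^{1/r}}e(\beta t^{r})\,dt$, and $\mathfrak{g}$ similarly; the errors are routine once $\eta$ is small. For $T$ I would split $N$ into residue classes $b\bmod q$, which extracts the factor $e(-ab/q)$ and leaves $\sum_{N\le(\ell+1)X,\,N\equiv b\,(q)}\tau_{k}(N)e(-N\beta)$; into this I insert the expansion $\sum_{N\le y,\,N\equiv b\,(q)}\tau_{k}(N)=\sum_{j=0}^{k-1}c_{j+1}(b,q)\bigl(y\cdot(\text{polynomial of degree }j\text{ in }\log y)\bigr)+(\text{error})$ of \cite{C1} and use partial summation against $e(-N\beta)$. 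By \eqref{Aj1} the sum over $b$ of the main part is $A_{j}(q)$, which is independent of $a$, so the sum over $a$ reassembles the Gauss-sum factor $\sum_{(a,q)=1}G_{r}^{\ell}(a,0;q)G_{s}(a,0;q)$. Performing the substitutions $N=Xu_{1}$, $t=X^{1/r}u_{2}$, $t=X^{1/s}u_{3}$, $\beta=X^{-1}\gamma$ and expanding $(\log X+\log u_{1})^{j}=\sum_{i=0}^{j}\binom{j}{i}(\log X)^{j-i}(\log u_{1})^{i}$, the contribution of $\mathfrak{M}$ becomes $\sum_{j=0}^{k-1}\sum_{i=0}^{j}\binom{j}{i}X^{\ell/r+1/s}(\log X)^{j-i}$ times truncated versions of $\mathfrak{S}_{k,r,s,\ell,j}$ and $\mathfrak{J}_{r,s,\ell,i}$, the factorials from the $\log$-polynomial collecting into the $1/j!$ in \eqref{S}.

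It then remains to complete the singular series $\sum_{q\le Q}\to\sum_{q\ge1}$ and the singular integral $\int_{|\gamma|\le Q}\to\int_{\mathbb{R}}$: by \eqref{Aj2}, the estimate $G_{r}(a,0;q)\ll q^{1-1/r+\varepsilon}$ and the decay of $v(\beta)$ the completed objects converge (one checks $\ell/r+1/s>1$), and the tails cost only $\ll Q^{-c}X^{\ell/r+1/s+\varepsilon}$ for some $c>0$. Gathering this with $\delta_{1}(\eta)$, the major-arc approximation error and the contribution of the error term from the arithmetic-progression formula, and optimising over $\eta$, gives the admissible $\delta_{k,r,s,\ell}>0$ recorded in Section \ref{delta}. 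I expect the \emph{main obstacle} to be this last contribution: one needs the error term in $\sum_{N\le y,\,N\equiv b\,(q)}\tau_{k}(N)$ to beat its trivial size by a fixed power of $X$, uniformly for moduli $q$ up to $Q=X^{\eta}$ and on average over $b\bmod q$, even after the twist by $e(-N\beta)$ produced by partial summation. For $k\ge4$ one lacks the Weil-type square-root cancellation that is available when $k\le3$, so $\eta$ has to be kept small; balancing this restriction against the minor-arc saving and the completion error is precisely what pins down the final value of $\delta_{k,r,s,\ell}$.
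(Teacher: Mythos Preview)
Your proposal is correct and follows essentially the same route as the paper. The only cosmetic difference is that you rederive the major-arc approximation for $T(\alpha)$ by hand (splitting into residues $b\bmod q$, inserting Chace's formula for $\tau_k$ in progressions, and applying partial summation), whereas the paper packages this step as a quoted lemma $F(a/q+\beta,X)=\sum_{j=0}^{k-1}A_j(q)I_j(\beta)+O(P^{k+\varepsilon}+X^{(k-1)/(k+2)+\varepsilon}P)$ from \cite{C2}; your identification of the error in this formula as the bottleneck that forces $\eta$ small is exactly right.
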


\begin{remark}

The magnitude of the leading term is $X^{\frac{\ell}{r}+\frac{\ell}{s}}\log^{k-1}X$,
which is consistent with the
known cases. Furthermore, although
we do not attempt to obtain the best value for the positive constant
$\delta_{k, r,s, \ell}$ for the sake of conciseness, the savings in the $O$-terms
reflected in the values of $\delta_{k, r,s, \ell}$ are
consistent with the best existing results
(except for a few isolated cases; see Section \ref{delta}).
So we generalize previous results.
\end{remark}

\medskip

\section{Proof of the main result}
\setcounter{equation}{0}
\medskip
We will prove Theorem \ref{Thm} by the circle method. In order to apply the circle method, we choose the parameters
$P$ and $Q$ such that
\bna
P=X^{\theta}, \qquad Q=X^{1-\theta},
\ena
where $\theta$ is a positive number to be decided later.
By Dirichlet lemma on rational approximation, each
$\alpha\in I:=\left[Q^{-1},1+Q^{-1}\right]$ can be written in the form
\bea\label{rational approximations}
\alpha=\frac{a}{q}+\beta,\qquad |\beta|\le\frac{1}{qQ}
\eea
for some integers $a$, $q$ with $1\le a\le q\le Q$ and $(a,q)=1$.
We denote by $ \mathfrak{M}(a,q)$ the set of $\alpha$ satisfying \eqref{rational approximations} and define the major arcs and the minor arcs as follows:
\bna
\mathfrak{M}=\bigcup_{1\le q\le P}\bigcup_{1\le a\le q\atop (a,q)=1}
\mathfrak{M}(a,q),\qquad
\mathfrak{m}=\Big[1/Q,1+1/Q\Big] \backslash \mathfrak{M}.
\ena
Let
\bea\label{FT}
F(\alpha,X)=\sum_{1\le n \le (\ell+1)X}\tau_k(n)e(-\alpha n),\qquad T_r\left(\alpha,X\right)=\sum_{1\le n \le X^{\frac{1}{r}} }e(\alpha n^r).
\eea
Then by the orthogonality relation
\begin{center}
	
	$\int_{0}^{1} e(n\alpha)d\alpha=\begin{cases}
	1,&\text{if } n=0,\\
	0,&\text{if } n\in \mathbb{Z}\backslash \{0\},
	
\end{cases}$

\end{center}
we have
\bna
\sum_{1\leq n_1,n_2, \dots, n_{\ell}\leq X^{\frac{1}{r}}
\atop 1\leq n_{\ell+1}\le X^{\frac{1}{s}}}
\tau_k(n_1^r+n_2^r+\dots +n_{\ell}^r+n_{\ell+1}^s)=\int_{0}^{1} T_r^{\ell}(\alpha,X)T_s(\alpha,X)F(\alpha,X)\mathrm{d}\alpha.
\ena
Note that $ T_r^\ell(\alpha,X)T_s(\alpha,X)F(\alpha,X)$ is a periodic function of period 1, one further has
\bea\label{zong-fen}
&&\sum_{1\leq n_1,n_2, \dots ,n_{\ell}\leq X^{\frac{1}{r}}\atop
1\leq n_{\ell+1}\le X^{\frac{1}{s}}}\tau_k(n_1^r+n_2^r+\dots +n_{\ell}^r+n_{\ell+1}^s)
\nonumber\\&=&\int_{\frac{1}{Q}}^{1+\frac{1}{Q}}T_r^\ell(\alpha,X)
T_s(\alpha,X)F(\alpha,X)\mathrm{d}\alpha\nonumber\\
&=&{\int_\mathfrak{M}T_r^\ell(\alpha,X)
T_s(\alpha,X)F(\alpha,x)\mathrm{d}\alpha+\int _\mathfrak{m}}
T_r^{\ell}(\alpha,X)T_s(\alpha,X)F(\alpha,X)\mathrm{d}\alpha.
\eea

For the integral over the major arcs $\mathfrak{M}$, we have the following result
which we shall prove in Section 3.
\begin{proposition}\label{pro}
Let $F(\alpha, X)$ and $T_r(\alpha, X)$ be defined as in \eqref{FT}.
There is a constant $\delta_{k, r,s, \ell}>0$ such that for any $\varepsilon>0$,
\bea
&&\int_\mathfrak{M}T_r^\ell(\alpha,X)T_s\left(\alpha,X\right)
F(\alpha,x)d\alpha\nonumber\\&=&\sum_{j=0}^{k-1}
\mathfrak{S}_{k, r,s, \ell, j} \sum_{i=0}^{j}\binom{j}{i}
\mathfrak{J}_{r,s, \ell, i} X^{\frac{\ell}{r}+\frac{1}{s}}
\left(\log X\right)^{j-i}+
O\left(X^{\frac{\ell}{r}+\frac{1}{s}-\delta_{k, r, s, \ell}+\varepsilon}\right),
\eea
where $\mathfrak{S}_{k, r,s, \ell, j}$ and $\mathfrak{J}_{r,s, \ell, i}$
are defined as in Theorem \ref{Thm}.
\end{proposition}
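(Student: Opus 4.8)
The plan is to carry out a standard major‑arc analysis of the circle‑method integral, treating the divisor generating function $F(\alpha,X)$ via the known Voronoi‑type / functional‑equation expansion for $\tau_k$. First I would parametrize $\alpha\in\mathfrak{M}(a,q)$ as $\alpha=a/q+\beta$ with $q\le P$ and $|\beta|\le 1/(qQ)$, and on each such arc replace the three factors by their principal approximations. For the Weyl sums $T_r(\alpha,X)$ and $T_s(\alpha,X)$ one uses the classical approximation $T_r(a/q+\beta,X)=q^{-1}G_r(a,0;q)\,v_r(\beta,X)+O(\text{error})$, where $v_r(\beta,X)=\int_0^{X^{1/r}}e(\beta u^r)\,du$; the $\ell$‑fold product contributes $q^{-\ell}G_r^\ell(a,0;q)$ together with $v_r^\ell$. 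For $F(\alpha,X)$ one invokes the expansion from \cite{C1},\cite{C2}: near $a/q$, $F(a/q+\beta,X)$ is well approximated by a main term of the shape $\sum_{j=0}^{k-1}A_j(q)\,(\text{polynomial in }\log)\,\widehat{w}_j(\beta)$, where $A_j(q)$ is as in \eqref{Aj1} and the $\widehat{w}_j$ are Fourier transforms of $\log^j$‑weighted indicators; this is precisely the source of the $A_j(q)$ and of the $\log^i u_1$ factors in \eqref{J}. Substituting all three approximations and multiplying out, the main term factorizes into an arithmetic part $q^{-\ell-1}G_r^\ell(a,0;q)G_s(a,0;q)A_j(q)$ summed over $a,q$, and an analytic part which is an integral over $\beta$ of $v_r^\ell\,v_s\,\widehat{w}_i$.

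Next I would extend the $q$‑sum to $\infty$ and the $\beta$‑integral to all of $\mathbb{R}$, incurring acceptable errors: the tail $q>P$ is controlled by the bound \eqref{Aj2}, namely $A_j(q)\ll_k q^{-1}$, together with the standard Gauss‑sum estimate $G_r(a,0;q)\ll q^{1-1/r+\varepsilon}$ and square‑root cancellation in $a$ after opening $G_s$; summing $q^{-\ell-1}\cdot q^{\ell(1-1/r)}\cdot q^{1-1/s}\cdot q^{-1}$ converges once $\ell\ge 2^{r-1}$ (this is where the hypothesis on $\ell$ enters, guaranteeing enough saving from the $r$‑th power Weyl sums). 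Completing the $\beta$‑integral, a change of variables $u_1\mapsto u_1$, $u_2=n_2/X^{1/r}$, etc., rescales everything by $X^{\ell/r+1/s}$ and identifies the completed analytic integral with $\mathfrak{J}_{r,s,\ell,i}$ as in \eqref{J}, while the completed arithmetic sum is $j!\,\mathfrak{S}_{k,r,s,\ell,j}$ as in \eqref{S}; the binomial $\binom{j}{i}$ arises from expanding $(\log X+\log u_1)^{?}$‑type terms produced when one separates the $\log^j$ weight on $F$ into $\log X$ and the local $\log u_1$. Collecting the main terms over $0\le i\le j\le k-1$ yields exactly the stated sum $\sum_{j}\mathfrak{S}_{k,r,s,\ell,j}\sum_{i\le j}\binom{j}{i}\mathfrak{J}_{r,s,\ell,i}X^{\ell/r+1/s}(\log X)^{j-i}$.

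The main obstacle, and the step deserving the most care, is the \emph{error analysis on the major arcs}: one must show that each of the approximation errors (in $T_r$, in $T_s$, and most delicately in $F$), when multiplied by the remaining two factors and integrated over $\mathfrak{M}$, is $O\!\left(X^{\ell/r+1/s-\delta_{k,r,s,\ell}+\varepsilon}\right)$. The worst term is typically the product of the error in $F(\alpha,X)$ against $|T_r|^\ell|T_s|$; bounding $|T_r(\alpha,X)|$ pointwise on $\mathfrak{M}$ by $q^{-1/r+\varepsilon}X^{1/r}(1+X|\beta|)^{-1/r}$ (and similarly for $T_s$), one integrates over $q\le P$ and $|\beta|\le 1/(qQ)$. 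The constraint $\ell\ge 2^{r-1}$ is exactly what makes $\ell$ copies of the $T_r$‑saving outweigh the loss in the $F$‑approximation, and choosing the free parameter $\theta$ (hence $P=X^\theta$, $Q=X^{1-\theta}$) optimally balances the major‑arc error against the minor‑arc contribution handled separately; this optimization produces the explicit $\delta_{k,r,s,\ell}$ recorded in Section \ref{delta}. A secondary technical point is justifying the interchange of summation and integration when completing the singular series and singular integral, which follows from the absolute convergence established via \eqref{Aj2} and the Gauss‑sum bounds. Once these estimates are in place, the proposition follows by assembling the main term and absorbing all errors into the stated $O$‑term.
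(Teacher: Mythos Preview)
Your overall strategy is correct and matches the paper's: approximate $T_r$, $T_s$, $F$ on each arc $\mathfrak{M}(a,q)$, multiply out, extract the main term as (arithmetic sum)$\times$(analytic integral), and then complete both to $\mathfrak{S}_{k,r,s,\ell,j}$ and $\mathfrak{J}_{r,s,\ell,i}$ with acceptable tails. The paper does exactly this, expanding $T_r^\ell$ via the binomial theorem into pieces $\mathbf{R}_i(q,\beta)$ ($0\le i\le \ell$) built from the main part $q^{-1}G_r(a,0;q)\Psi_r(\beta)$ and the error $E_r(q,\beta)\ll q^{1/2+\varepsilon}(1+|\beta|X)^{1/2}$; $\mathbf{R}_0$ yields the main term after completing the $q$-sum and the $\beta$-integral, and the $\mathbf{R}_i$ with $i\ge 1$ are shown to be negligible by direct estimation using \eqref{Aj2}, \eqref{Ij2}, \eqref{Psi}, \eqref{Gr}.

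Two points of divergence are worth noting. First, your handling of the $F$-approximation error is not how the paper proceeds, and your suggested pointwise bound $|T_r(\alpha,X)|\ll q^{-1/r+\varepsilon}X^{1/r}(1+X|\beta|)^{-1/r}$ on $\mathfrak{M}$ is not valid as stated: it is the bound for the \emph{main} part of $T_r$, but the remainder $E_r(q,\beta)$ is of size $q^{1/2+\varepsilon}(1+|\beta|X)^{1/2}$ and can dominate. The paper instead bounds the $O$-term $(P^{k+\varepsilon}+X^{\eta+\varepsilon}P)\int_{\mathfrak{M}}|T_r|^\ell|T_s|\,d\alpha$ by a H\"older/mean-value argument using Hua's lemma $\int_0^1|T_r|^{2^r}\ll X^{2^r/r-1+\varepsilon}$ (see \eqref{O1}--\eqref{O2}); this is where the shape of the hypothesis $\ell\ge 2^{r-1}$ is actually used on the major arcs. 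Second, the ``square-root cancellation in $a$ after opening $G_s$'' you invoke is neither needed nor used: the paper simply bounds $|G_r(a,0;q)|\ll q^{1-1/r+\varepsilon}$, $|G_s(a,0;q)|\ll q^{1-1/s+\varepsilon}$, $|A_j(q)|\ll q^{-1}$, and sums trivially over $a\bmod q$, which already gives a convergent tail $\sum_{q>P}q^{-\ell/r-1/s+\varepsilon}$ since $\ell/r\ge 1$ for $\ell\ge 2^{r-1}$, $r\ge 2$.
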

For the integral from the minor arcs, we will use Hua's Lemma (see Lemma 2.5 in \cite{V}).
\begin{lemma}\label{Hua's lemma}
	Suppose that $1\le j \le k$. Then
	\bna
	\int_{0}^{1}\left|\sum_{1\le m\le N}e(\alpha m^k) \right|^{2^j}d\alpha \ll N^{2^j-j+\varepsilon} .
	\ena
\end{lemma}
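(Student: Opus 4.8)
The plan is to derive this from the classical Hua inequality, which is proved by induction on $j$ via Weyl's differencing method; since only the final bound is used in the sequel one may simply quote Lemma~2.5 of \cite{V}, but I indicate the argument. Write $g(\alpha)=\sum_{1\le m\le N}e(\alpha m^{k})$ and $S_{j}(N)=\int_{0}^{1}|g(\alpha)|^{2^{j}}\,d\alpha$. For the base case $j=1$, orthogonality identifies $S_{1}(N)$ with the number of pairs $(m_{1},m_{2})\in[1,N]^{2}$ satisfying $m_{1}^{k}=m_{2}^{k}$, which is exactly $N$; hence $S_{1}(N)=N\le N^{2^{1}-1}$, as required.

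For the inductive step one assumes $2\le j\le k$ together with the bound for $j-1$, and applies Weyl differencing to the square:
$$
|g(\alpha)|^{2}=\sum_{|h|<N}\ \sum_{m\in I(h)}e\bigl(\alpha\,\Delta_{h}(m)\bigr),\qquad \Delta_{h}(m):=(m+h)^{k}-m^{k},
$$
where each $I(h)$ is a subinterval of $[1,N]$. The term $h=0$ gives $\Delta_{0}\equiv 0$ and isolates the diagonal contribution $\#I(0)\le N$, while for $h\neq 0$ the phase $\Delta_{h}(m)$ is a polynomial in $m$ of degree $k-1\ge j-1$ with nonzero leading coefficient $kh$. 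Raising to the power $2^{j-1}$, separating the diagonal term \emph{before} applying H\"older's inequality over the $O(N)$ remaining shifts $h$, integrating over $\alpha\in[0,1]$, and invoking the inductive bound in the form appropriate to exponential sums over the polynomials $\Delta_{h}$ of degree $k-1$ (equivalently, by iterating the same differencing procedure), one recovers $S_{j}(N)\ll N^{2^{j}-j+\varepsilon}$. Combining the two cases completes the induction, and taking $N=N$, $k=k$ gives the stated inequality.

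The delicate point is precisely the last step: one must track the diagonal and off-diagonal terms produced by the differencing so that neither exceeds the target $N^{2^{j}-j+\varepsilon}$. This is why the $h=0$ term must be split off before any application of H\"older, and why the reduced exponential-sum moments have to be controlled with the shift $h$ summed rather than bounded term by term (a purely term-by-term estimate loses a power of $N$). This bookkeeping is exactly the substance of Hua's lemma; it is carried out in full in Lemma~2.5 of \cite{V}, which we apply directly.
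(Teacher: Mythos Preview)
Your proposal is correct and aligns with the paper's own treatment: the paper does not prove this lemma at all but simply quotes it as Lemma~2.5 of \cite{V}, exactly the reference you invoke. Your additional sketch of the inductive Weyl-differencing argument is faithful to the standard proof and goes beyond what the paper supplies, but since both you and the paper ultimately rest the lemma on the same citation, the approaches are essentially identical.
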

By Cauchy-Schwarz's inequality one has
\bea\label{minor arcs}
&&\int_\mathfrak{m}T_r^\ell(\alpha,X)T_s(\alpha,X)F(\alpha,X)\mathrm{d}\alpha\nonumber\\&\ll&
\sup_{\alpha \in \mathfrak{m}} \left|T_s(\alpha,X)\right|\left|T_r(\alpha,X)\right|^{\ell-2^{r-1}}\Big(\int_\mathfrak{m}\left| T_r(\alpha,X) \right|^{2^r}\mathrm{d}\alpha\Big)^\frac{1}{2}\Big(\int_\mathfrak{m} \left|F(\alpha,X )\right|^2\mathrm{d}\alpha \Big)^\frac{1}{2}.
\eea
For the first integral in \eqref{minor arcs}, we apply Lemma \ref{Hua's lemma} to get
\bna
\int_\mathfrak{m} |T_r\left(\alpha, X\right)|^{2^{r}} \mathrm{d} \alpha\le \int_{0}^{1} |T_r\left(\alpha, X\right)|^{2^{r}} \mathrm{d} \alpha \ll X^{\frac{2^{r}}{r}-1+\varepsilon}.
\ena
For the last integral in \eqref{minor arcs}, a result of Shiu (Theorem 2 in \cite{Shiu}) shows that
\bna
&&\int_\mathfrak{m} \left|F(\alpha, X)\right|^{2} \mathrm{d} \alpha \le
\int_{0}^{1} \left|F(\alpha, X)\right|^{2} \mathrm{d} \alpha
=\int_{0}^{1}\Big|\sum_{1\le n \le (\ell+1)X}\tau_{k}(n)e(-\alpha n)\Big|^{2}\mathrm{d}\alpha
\\ &=& \sum_{1\le n \le (\ell+1)X}\tau_{k}^{2}(n) \ll X^{1+\varepsilon}.
\ena
Plugging these estimates into \eqref{minor arcs} we abtain
\bea\label{minor arcs 1}
\int_\mathfrak{m}T_r^\ell(\alpha,X)T_s(\alpha,X)F(\alpha,X)\mathrm{d}\alpha \ll
X^{\frac{2^{r-1}}{r}+\varepsilon}\sup_{\alpha \in \mathfrak{m}} |T_s(\alpha,X)||T_r(\alpha,X)|^{\ell-2^{r-1}}.
\eea
In order to make the error term in the asymptotic formula as small as possible, we distinguish four cases according to the values of $r$ and $s$.\\

 \quad  (\romannumeral1) $2\le r\le 7$, $2\le s\le 7$.

  In this case we apply  Weyl's inequality (see Lemma 2.4 in \cite{V}) to get, for $\alpha \in \mathfrak{m}$
 \bea\label{T_r} T_r\left(\alpha, X\right) \ll X^{\frac{1}{r}+\varepsilon}(P^{-1}+X^{-\frac{1}{r}}+QX^{-1})^{\frac{1}{2^{r-1}}} \ll X^{\frac{1}{r}-\frac{\theta}{2^{r-1}}+\varepsilon}, \eea
for $\theta\le\frac{1}{k+r}$, and similarly,
 \bea\label{T_s} T_s\left(\alpha, X\right)\ll X^{\frac{1}{s}-\frac{\theta}{2^{s-1}}+\varepsilon}. \eea
 Thus by \eqref{minor arcs 1}-\eqref{T_s}, one has
 \bea\label{romannumeral1}
&&\int_\mathfrak{m}T_r^\ell(\alpha,X)T_s(\alpha,X)F(\alpha,X)\mathrm{d}\alpha\nonumber\\ &\ll&
X^{\frac{2^{r-1}}{r}+\varepsilon}\cdot X^{\frac{1}{s}-\frac{\theta}{2^{s-1}}+\varepsilon}\cdot X^{\left(\frac{1}{r}-\frac{\theta}{2^{r-1}}+\varepsilon\right)\left(\ell-2^{r-1}\right)}
\nonumber\\ &\ll& X^{\frac{\ell}{r}+\frac{1}{s}-\theta\left(\frac{\ell}{2^{r-1}}+\frac{1}{2^{s-1}}-1\right)+\varepsilon}.
\eea

 \quad  (\romannumeral2) $2\le r\le 7$, $s\ge 8$.

 In this case, we use Lemma 1.6 of \cite{LM} in place of Weyl's inequality to get
 \bea\label{T_s1} T_s\left(\alpha, X\right) \ll X^{\frac{1}{s}+\varepsilon}\left(P^{-1}+X^{-\frac{1}{s}}+QX^{-1}\right)^{\frac{1}{2s(s-1)}} \ll X^{\frac{1}{s}-\frac{\theta}{2s(s-1)}+\varepsilon}. \eea
 \\ Thus by \eqref{minor arcs 1},\eqref{T_r} and \eqref{T_s1}, we have
 \bea\label{romannumeral2}
&&\int_\mathfrak{m}T_r^\ell(\alpha,X)T_s(\alpha,X)F(\alpha,X)\mathrm{d}\alpha\nonumber\\ &\ll&
X^{\frac{2^{r-1}}{r}+\varepsilon}\cdot X^{\frac{1}{s}-\frac{\theta}{2s(s-1)}+\varepsilon}\cdot X^{\left(\frac{1}{r}-\frac{\theta}{2^{r-1}}+\varepsilon\right)\left(\ell-2^{r-1}\right)}
\nonumber\\ &\ll& X^{\frac{\ell}{r}+\frac{1}{s}-\theta\left(\frac{\ell}{2^{r-1}}+\frac{1}{2s(s-1)}-1\right)+\varepsilon}.
\eea

\quad  (\romannumeral3) $r\ge8$, $2\le s\le 7$.

Similarly as in the case (\romannumeral2),
 \bea\label{T_r1} T_r\left(\alpha, x\right) \ll X^{\frac{1}{r}-\frac{\theta}{2r(r-1)}+\varepsilon}, \eea
and by \eqref{minor arcs 1}, \eqref{T_s} and \eqref{T_r1},
 \bea\label{romannumeral3}
&&\int_\mathfrak{m}T_r^\ell(\alpha,X)T_s(\alpha,X)F(\alpha,X)\mathrm{d}\alpha\nonumber\\ &\ll&
X^{\frac{2^{r-1}}{r}+\varepsilon}\cdot X^{\frac{1}{s}-\frac{\theta}{2^{s-1}}+\varepsilon}\cdot X^{\left(\frac{1}{r}-\frac{\theta}{2r(r-1)}+\varepsilon\right)\left(\ell-2^{r-1}\right)}
\nonumber\\ &\ll& X^{\frac{\ell}{r}+\frac{1}{s}-\theta\left(\frac{\ell-2^{r-1}}{2r(r-1)}+\frac{1}{2^{s-1}}\right)+\varepsilon}.
\eea

\quad  (\romannumeral4) $r\ge8$, $s\ge 8$.

By \eqref{minor arcs 1}, \eqref{T_s1} and \eqref{T_r1}, one has
 \bea\label{romannumeral4}
&&\int_\mathfrak{m}T_r^\ell(\alpha,X)T_s(\alpha,X)F(\alpha,X)\mathrm{d}\alpha\nonumber\\ &\ll&
X^{\frac{2^{r-1}}{r}+\varepsilon}\cdot X^{\frac{1}{s}-\frac{\theta}{2s(s-1)}+\varepsilon}\cdot X^{\left(\frac{1}{r}-\frac{\theta}{2r(r-1)}+\varepsilon\right)\left(\ell-2^{r-1}\right)}
\nonumber\\ &\ll& X^{\frac{\ell}{r}+\frac{1}{s}-\theta\left(\frac{1}{2s(s-1)}+\frac{\ell-2^{r-1}}{2r(r-1)}\right)+\varepsilon}.
\eea
From Proposition \ref{pro}, \eqref{romannumeral1},
\eqref{romannumeral2}, \eqref{romannumeral3} and \eqref{romannumeral4},
Theorem \ref{Thm} follows.
\section{The integral over the major arcs}
In this section, we will give a proof of Proposition \ref{pro}.
By the definition of major arcs,  we clearly have
\bea\label{major1}
&&\int_{\mathfrak{M}}T_r^\ell(\alpha,X)T_s(\alpha,X)F(\alpha,X)\mathrm{d}\alpha \nonumber\\
&=&\sum_{q \leq P} \sum_{1\le a \le q \atop (a,q)=1}\;\int_{\mathfrak{M}(a,q)}T_r^\ell(\alpha,X)T_s(\alpha,X)F(\alpha,X)\mathrm{d}\alpha \nonumber\\
&=&\sum_{q \leq P} \int_{|\beta| \leq \frac{1}{q Q}}
\sum_{1\le a \le q \atop (a,q)=1} T_r^{\ell}\left(\frac{a}{q}+\beta, X\right)
T_s\left(\frac{a}{q}+\beta, X\right)F\left(\frac{a}{q}+\beta, X\right)  \mathrm{d}\beta.
\eea
Now we are in a position to introduce the estimates concerning
the above two functions  $T_r\left(a/q+\beta, X\right)$
and  $F\left(a/q+\beta, X\right).$ We first quote the
following asymptotic formula  for $F\left(a/q+\beta,X\right)$
(see Section 4 in Chace \cite{C2} or Lemma 3.1 in Hu and L{\"u} \cite{HL}).

\begin{lemma}\label{F}Assume $k\ge 4$. Suppose that $(a,q)=1,$ $q \leq P \leq X^{1 / k}$ and $|\beta| \leq 1 /(q Q)$. We have
\bna
F\left(\frac{a}{q}+\beta, X\right)=\sum_{j=0}^{k-1} A_{j}(q) I_{j}(\beta)+O\left(P^{k+\varepsilon}+X^{\eta+\varepsilon} P\right),
\ena
where  $\eta=(k-1) /(k+2),$ $A_{j}(q)$ are defined as in \eqref{Aj1},
and
\bea\label{Ij1}
I_{j}(\beta)=\int_{1}^{(\ell+1)X}e(-\beta u) \frac{\log^{j}u}{j!}du.
\eea
Moreover, we have
\bea\label{Ij2}
I_{j}(\beta)\ll_k X^{\varepsilon}\min \left \{X,|\beta|^{-1}\right \}.
\eea
\end{lemma}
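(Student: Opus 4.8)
The plan is to reduce the statement to the behaviour of $\tau_k$ along arithmetic progressions, which is precisely what Chace establishes in \cite{C1,C2}; the lemma is essentially a repackaging of that input. First I would sort the sum defining $F$ according to the residue of $n$ modulo $q$,
\bna
F\Big(\frac{a}{q}+\beta,X\Big)=\sum_{b=1}^{q}e\Big(-\frac{ab}{q}\Big)\sum_{1\le n\le (\ell+1)X\atop n\equiv b\bmod q}\tau_k(n)\,e(-\beta n),
\ena
so the whole problem rests on an asymptotic formula, uniform for $q\le P\le X^{1/k}$, for the summatory function $D_k(y;b,q)=\sum_{n\le y,\ n\equiv b\bmod q}\tau_k(n)$. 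Its expected shape is a main term $M_k(y;b,q)$ equal to $y$ times a polynomial of degree $k-1$ in $\log y$, whose coefficients are the arithmetic quantities $c_j(b,q)$ --- these appear by iterating the Dirichlet hyperbola identity $\tau_k=\tau_{k-1}\ast 1$ and sorting the resulting variables into residue classes with $b_1b_2\equiv b\bmod q$ --- together with an error term $E_k(y;b,q)\ll y^{\eta+\varepsilon}+q^{k-1+\varepsilon}$. Producing this with the error exponent $\eta=(k-1)/(k+2)$ is the substantive step: it comes from inserting van der Corput/Weyl type estimates for the exponential sums left over by the hyperbola dissection, and I would simply quote it from Section 4 of \cite{C2} (equivalently Lemma 3.1 of \cite{HL}).

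Granting this, I would re-introduce the oscillating weight by Riemann--Stieltjes integration,
\bna
\sum_{1\le n\le (\ell+1)X\atop n\equiv b\bmod q}\tau_k(n)\,e(-\beta n)=\int_{1^{-}}^{(\ell+1)X}e(-\beta u)\,\mathrm{d}D_k(u;b,q),
\ena
and substitute $D_k=M_k+E_k$. Integrating $e(-\beta u)$ against $\mathrm{d}M_k(u;b,q)$ reorganises --- and the $c_j(b,q)$ are normalised in \cite{C1} precisely so that it reorganises cleanly --- into $\sum_{j=0}^{k-1}c_{j+1}(b,q)\,I_j(\beta)$ with $I_j$ as in \eqref{Ij1}; summing over $b$ against $e(-ab/q)$ then produces $\sum_{j=0}^{k-1}A_j(q)\,I_j(\beta)$ with $A_j(q)$ as in \eqref{Aj1}. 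For the error, one integration by parts turns $\int e(-\beta u)\,\mathrm{d}E_k(u;b,q)$ into a boundary term $\ll X^{\eta+\varepsilon}+q^{k-1+\varepsilon}$ plus $2\pi i\beta\int_{1}^{(\ell+1)X}E_k(u;b,q)e(-\beta u)\,\mathrm{d}u\ll|\beta|\big(X^{1+\eta+\varepsilon}+Xq^{k-1+\varepsilon}\big)$; since $|\beta|\le 1/(qQ)$ and $X/Q=P$, each residue class contributes $\ll (X^{\eta+\varepsilon}+q^{k-1+\varepsilon})(1+P/q)$, and summing over the $q\le P$ classes $b$ leaves $\ll PX^{\eta+\varepsilon}+P^{k+\varepsilon}$, as required.

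Finally, \eqref{Ij2} is elementary. The trivial estimate gives $I_j(\beta)\ll X\log^{j}X\ll X^{1+\varepsilon}$; and for the $|\beta|^{-1}$ aspect, one integration by parts in $\int_{1}^{(\ell+1)X}e(-\beta u)\log^{j}u\,\mathrm{d}u$ produces a boundary term $\ll|\beta|^{-1}\log^{j}X$ together with $\ll|\beta|^{-1}\int_{1}^{(\ell+1)X}\frac{\log^{j-1}u}{u}\,\mathrm{d}u\ll|\beta|^{-1}\log^{j}X$, so that $I_j(\beta)\ll X^{\varepsilon}\min\{X,|\beta|^{-1}\}$. The only step of the whole argument that is not routine is the uniform-in-$q$ asymptotic for $\tau_k$ on arithmetic progressions with the error exponent $\eta$; once that is imported from \cite{C1,C2}, everything else is partial summation and bookkeeping.
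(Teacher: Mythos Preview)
Your proposal is correct and, in fact, more detailed than what the paper itself provides: the paper does not prove this lemma at all but simply quotes it from Section~4 of Chace~\cite{C2} (equivalently Lemma~3.1 of Hu and L\"u~\cite{HL}). Your sketch --- sorting by residues modulo $q$, inputting Chace's uniform asymptotic for $\sum_{n\le y,\,n\equiv b\bmod q}\tau_k(n)$ with error $y^{\eta+\varepsilon}+q^{k-1+\varepsilon}$, then partial summation to reinsert $e(-\beta n)$, and finally the trivial/integration-by-parts bound for $I_j(\beta)$ --- is exactly how that cited result is obtained, so there is no discrepancy in approach.
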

We also need some estimates for $T_r(a/q+\beta,X)$.
\begin{lemma}\label{T}
Let $(a, q)=1$, and $|\beta| \leq 1/(qQ)$. We have
\bna
T_r\left(\frac{a}{q}+\beta, X\right)=\frac{G_r(a,0;q)} q\Psi_r(\beta)
+O\left(q^{\frac{1}{2}+\varepsilon}\left(1+ |\beta|X\right)^\frac{1}{2}\right),
\ena
where $G_r(a,0;q)$ is defined in \eqref{Gauss sum} and
\bea\label{Psi0}
\Psi_r(\beta)=\int_{0}^{X^{1/r}} e\left(\beta u^{r} \right) \mathrm{d} u  .
\eea
Moreover
\bea\label{Psi}
\Psi_r(\beta)\ll\left(\frac{X}{1+|\beta|X} \right)^{1/r}.
\eea
\end{lemma}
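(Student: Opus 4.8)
Write $N=X^{1/r}$. The plan is that Lemma~\ref{T} is the classical major-arc approximation for a Weyl sum: with exponent $k=r$ and summation length $P=X^{1/r}$ (so that $P^{k}=X$) it is exactly Theorem~4.1 of Vaughan \cite{V}, and the quickest route is simply to quote it. I would nonetheless outline the argument. Sort the sum defining $T_{r}(a/q+\beta,X)$ into residue classes modulo $q$: since $n^{r}\equiv z^{r}\pmod q$ whenever $n\equiv z\pmod q$,
\[
T_{r}\Big(\frac aq+\beta,X\Big)=\sum_{z\bmod q}e\Big(\frac{az^{r}}{q}\Big)\,\Sigma_{z},\qquad \Sigma_{z}=\sum_{\substack{1\le n\le N\\ n\equiv z\,(q)}}e(\beta n^{r}).
\]
The key point is that $\beta u^{r}$ is slowly varying on the scale of $q$: as $n$ runs through an interval of length $q$ the phase changes by $\ll |\beta|qN^{r-1}\le N^{r-1}/Q=X^{\theta-1/r}=o(1)$, using $|\beta|\le 1/(qQ)$, $Q=X^{1-\theta}$ and $\theta<1/r$ (which the later choice of $\theta$ respects).

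Consequently each progression sum $\Sigma_{z}$ is well approximated, by Euler--Maclaurin (a Riemann sum of spacing $q$), by $q^{-1}\int_{0}^{N}e(\beta u^{r})\,\mathrm{d}u=q^{-1}\Psi_{r}(\beta)$; summing over $z$ against $e(az^{r}/q)$ and recalling $\sum_{z\bmod q}e(az^{r}/q)=G_{r}(a,0;q)$ produces the main term $q^{-1}G_{r}(a,0;q)\Psi_{r}(\beta)$. It remains to control the accumulated discrepancy, and here a crude per-class estimate $\Sigma_{z}-q^{-1}\Psi_{r}(\beta)=O(1)$ only gives an error $O(q)$. Instead one detects the congruence $n\equiv z\,(q)$ with additive characters, which rewrites $\sum_{z}e(az^{r}/q)\Sigma_{z}$ as $q^{-1}\sum_{h\bmod q}G_{r}(a,-h;q)V_{h}$ with $V_{h}=\sum_{1\le n\le N}e(\beta n^{r}+hn/q)$; the term $h\equiv0$ gives the main term (with $V_{0}=\Psi_{r}(\beta)+O(1)$ by Euler--Maclaurin, again because $|\beta|$ is tiny relative to $Q$), and the terms $h\not\equiv0$, bounded by the standard estimates for complete and incomplete Gauss sums attached to $ax^{r}+bx$ together with first- and second-derivative tests on $V_{h}$, contribute, after Vaughan's careful dissection, $O\big(q^{1/2+\varepsilon}(1+|\beta|X)^{1/2}\big)$. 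This last bookkeeping is precisely the content of Vaughan's proof, which I would reproduce.

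For the subsidiary bound \eqref{Psi}, substitute $u=Nv$ so that $\Psi_{r}(\beta)=N\int_{0}^{1}e(\beta Xv^{r})\,\mathrm{d}v$, and apply the standard estimate for an oscillatory integral with a monomial phase: the initial range $0\le v\le(1+|\beta|X)^{-1/r}$ contributes $\ll N(1+|\beta|X)^{-1/r}$ trivially, while on the complement the phase derivative exceeds $\gg(1+|\beta|X)^{1/r}$, so the first-derivative test (integration by parts) gives the same bound there; adding these yields $\Psi_{r}(\beta)\ll\big(X/(1+|\beta|X)\big)^{1/r}$. The only genuinely delicate step is the extraction of the precise shape $q^{1/2+\varepsilon}(1+|\beta|X)^{1/2}$ of the error rather than the cruder $q^{1-1/r+\varepsilon}(1+|\beta|X)$ that a naive treatment of the Gauss sums produces; this needs the square-root-cancellation information for these sums and Vaughan's argument. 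Since for our purposes it suffices to cite Theorem~4.1 of \cite{V}, this poses no obstacle here.
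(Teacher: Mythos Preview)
Your proposal is correct and follows the paper's own route: both proofs simply invoke Theorem~4.1 of Vaughan~\cite{V} for the asymptotic formula, and both obtain \eqref{Psi} by combining the trivial bound with a van der Corput derivative test (the paper uses the $r$-th derivative test directly, you split the range and use the first-derivative test; either works). One small inaccuracy in your sketch: the claim $V_{0}=\Psi_{r}(\beta)+O(1)$ is not quite right, since Euler--Maclaurin gives $O(1+|\beta|X)$ here and $|\beta|X$ can be as large as $P/q$; Vaughan's argument handles this more carefully, but since you ultimately defer to his proof this is harmless.
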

\begin{proof}
  The asymptotic formula can be found in Vaughan \cite{V} (see Theorem 4.1). \eqref{Psi} follows from the $r$-th derivative test together with the trival estimate.
\end{proof}
By Lemma \ref{T}, we have
\bea\label{Tr}
T_r\left(\frac{a}{q}+\beta, X\right)=\frac{G_r(a,0;q)} q\Psi_r(\beta)+E_r(q,\beta),
\eea
where
\bna
E_r(q,\beta)\ll q^{\frac{1}{2}+\varepsilon}\left(1+|\beta|X\right)^\frac{1}{2}.
\ena
Thus
\bna
T_r^\ell\left(\frac{a}{q}+\beta, X\right)=\sum_{i=0}^{\ell}\binom{\ell}{i}
\frac{G_r^{\ell-i}(a,0;q)} {q^{\ell-i}}\Psi_r^{\ell-i}(\beta)E_r(q,\beta)^i
\ena
and
\bna
&&\sum_{1\le a \le q \atop (a,q)=1}T_r^\ell\left(\frac{a}{q}+\beta, X\right)T_s\left(\frac{a}{q}+\beta, X\right)F\left(\frac{a}{q}+\beta, X\right)\nonumber\\&=&
\sum_{i=0}^{\ell}\binom{\ell}{i}\mathbf{R}_{i}(q,\beta)
T_s\left(\frac{a}{q}+\beta,X\right)+
O\Bigg(\left(P^{k+\varepsilon}+X^{\eta+\varepsilon}P\right)
\sum_{1\le a \le q \atop (a,q)=1}
\Big| T_r\left(\frac{a}{q}+\beta, X\right)\Big|^{\ell}
\Big| T_s\left(\frac{a}{q}+\beta, X\right)\Big| \Bigg),
\ena
where
\bea\label{Ri}
\mathbf{R}_i(q,\beta)=\frac{\Psi_r^{\ell-i}(\beta)E_r(q,\beta)^{i}}{q^{\ell-i}} \sum_{j=0}^{k-1} I_{j}(\beta) \sum_{1\le a \le q \atop (a,q)=1}  A_{j}(q)G_r^{\ell-i}(a,0;q).
\eea
Inserting this into \eqref{major1}, we get
\bea\label{med-estimate}
&& \int_{\mathfrak{M}} T_{r}^{\ell}(\alpha, X) T_{s}(\alpha, X) F(\alpha, X)  \mathrm{d} \alpha \nonumber\\
&=& \sum_{i=0}^{\ell}\binom{\ell}{i} \sum_{q \leqslant P}
\int_{|\beta| \leqslant \frac{1}{q Q}}\mathbf{R}_i(q, \beta)
T_{s}\left(\frac{a}{q}+\beta, X\right)  \mathrm{d} \beta \nonumber\\
&+&O\left(\left(P^{k+\varepsilon}+X^{\eta+\varepsilon}P\right)
\int_{\mathfrak{M}}\left|T_{r}(\alpha, X)\right|^{\ell}\left|T_{s}(\alpha, X)\right|
 \mathrm{d} \alpha\right) .
\eea
\subsection{Estimate of the $O$-term}

We distinguish two cases. Assume
\bea\label{theta}
\theta\leq \frac{1}{k+r}.
\eea

If $2^{r-1}\leq \ell<2^{r}$ except for $\ell=2^{r}-1$ $(r=s)$, by H\"{o}lder's inequality
and Lemma \ref{Hua's lemma}, the $O$-term is bounded by
\bea\label{O1}
&&\left(P^{k+\varepsilon}+X^{\eta+\varepsilon} P\right)
\int_{\mathfrak{M}}\left|T_r\left(\alpha, X\right)\right|^{\ell}
\left|T_s\left(\alpha, X\right)\right| \mathrm{d} \alpha \nonumber\\
& \ll&\left(P^{k+\varepsilon}+X^{\eta+\varepsilon} P\right)
\sup_{\alpha \in I} \left|T_s\left(\alpha, X\right)\right|^{\frac{\ell}{2^{r-1}}-1}
\left(\int_{0}^{1}\left|T_r\left(\alpha, X\right)\right|^{2^{r}}
\mathrm{d} \alpha\right)^{\frac{\ell}{2^{r}}}\left(\int_{0}^{1}\left|T_s\left(\alpha, X\right)\right|^2
\mathrm{d} \alpha\right)^{\frac{2^r-\ell}{2^r}}\nonumber\\
& \ll&\left(X^{k\theta+\varepsilon}+
X^{\frac{k-1}{k+2}+\theta+\varepsilon}\right)
X^{\frac{\ell}{r}-\frac{\ell}{2^{r}}(1-\frac{1}{s})+\varepsilon}\nonumber\\
&\ll&X^{\frac{\ell}{r}-\frac{\ell}{2^{r}}(1-\frac{1}{s})+\frac{k-1}{k+2}+\theta+\varepsilon},
\eea
where the last inequality follow from the assumption in \eqref{theta}.

If $\ell \geq 2^{r}$ or $\ell=2^{r}-1$ $(r=s)$, by Lemma \ref{Hua's lemma}, the $O$-term can be bounded by
\bea\label{O2}
&&\left(P^{k+\varepsilon}+X^{\eta+\varepsilon} P\right)
\int_{\mathfrak{M}}\left|T_r\left(\alpha, X\right)\right|^{\ell}
\left|T_s\left(\alpha, X\right)\right| \mathrm{d} \alpha \nonumber\\
&\ll&\left(P^{k+\varepsilon}+X^{\eta+\varepsilon} P\right) \sup _{\alpha \in I}\left|T_r\left(\alpha, X\right)\right|^{\ell-2^{r}} \left|T_s\left(\alpha, X\right)\right|\left(\int_{0}^{1}\left|T_r\left(\alpha, X\right)\right|^{2^{r}} \mathrm{d} \alpha\right)\nonumber \\
&\ll& \left(X^{k\theta+\varepsilon}+
X^{\frac{k-1}{k+2}+\theta+\varepsilon}\right)
X^{\frac{\ell}{r}-(1-\frac{1}{s})+\varepsilon}\nonumber\\
&\ll&X^{\frac{\ell}{r}-(1-\frac{1}{s})+\frac{k-1}{k+2}+\theta+\varepsilon},
\eea
where the last inequality follow from the assumption in \eqref{theta}.
\subsection{Contribution from $\mathbf{R}_0(q,\beta)$}
By the definition of  $\mathbf{R}_0(q,\beta)$ in \eqref{Ri} and Lemma \ref{T}, we have
\bea\label{R0-1}
&&\sum_{q \leqslant P} \int_{|\beta| \leqslant \frac{1}{q Q}}
\mathbf{R}_{0}(q, \beta) T_{s}\Big(\frac{a}{q}+\beta, X\Big)  \mathrm{d} \beta \nonumber\\
&=& \sum_{q \leqslant P} \int_{|\beta| \leqslant \frac{1}{q Q}}
\mathbf{R}_{0}(q, \beta) \frac{G_{s}(a, 0;q)}{q} \Psi_{s}(\beta)  \mathrm{d} \beta \nonumber\\
&&\qquad+O\Bigg(\sum_{q \leqslant P} q^{\frac{1}{2}+\varepsilon}
\int_{|\beta| \leqslant \frac{1}{q Q}} |\mathbf{R}_{0}(q, \beta)|
 (1+|\beta|X)^{\frac{1}{2}}  \mathrm{d} \beta\Bigg)\nonumber\\
&:=&\Delta_1+\Delta_2,
\eea
where
\bna
\Delta_1=\sum_{j=0}^{k=1}\sum_{q\le P}\sum_{1\le a \le q \atop (a,q)=1}
\frac{ A_{j}(q)G_r^{\ell}(a,0;q)G_s(a,0;q)}{q^{\ell+1}}
\int_{|\beta| \leqslant \frac{1}{q Q}}I_{j}(\beta)\Psi_r^{\ell}(\beta)
\Psi_s(\beta)\mathrm{d}\beta
\ena
and
\bna
\Delta_2\ll \sum_{j=0}^{k=1}\sum_{q\le P}\sum_{1\le a \le q \atop \left(a,q\right)=1}
\frac{ |A_{j}(q)||G_r(a,0;q)|^{\ell}}{q^{\ell-\frac{1}{2}-\varepsilon}}
\int_{|\beta| \leqslant \frac{1}{q Q}}|I_j(\beta)||\Psi_r(\beta)|^\ell
\left(1+|\beta|X\right)^{\frac{1}{2}} \mathrm{d} \beta.
\ena

For $\Delta_{2}$ , by \eqref{Aj2}, \eqref{Ij2}, \eqref{Psi} and \eqref{Gr} we have
\bna
\Delta_{2}
 &\ll&X^{\frac{\ell}{r}}\sum_{q \leqslant P} q^{\frac{1}{2}-\frac{\ell}{r}+\varepsilon} \int_{ |\beta|\le\frac{1}{qQ}}\min\{X,|\beta|^{-1}\} \left(1+X|\beta|\right)^{\frac{1}{2}-\frac{\ell}{r}} \mathrm{d} \beta\nonumber\\
 &\ll&X^{\frac{\ell}{r}} \sum_{q \leqslant P} q^{\frac{1}{2}-\frac{\ell}{r}+\varepsilon} \int_{ |\beta|\le\frac{1}{X}}X \mathrm{d}\beta \nonumber\\
 &&+ X^{\frac{\ell}{r}}\sum_{q \leqslant P} q^{\frac{1}{2}-\frac{\ell}{r}+\varepsilon}
 \int_{\frac{1}{X}< |\beta|\le\frac{1}{qQ}}|\beta|^{-1}
 \left(1+X|\beta|\right)^{\frac{1}{2}-\frac{\ell}{r}} \mathrm{d} \beta.
\ena
Note that the condition $\ell\ge 2^{r-1}(r\ge 2)$ implies $\frac{\ell}{r}\ge 1$. Then
\bea\label{Delta2}
\Delta_2\ll
\begin{cases}
X^{\frac{\ell}{r}+\left(\frac{3}{2}-\frac{\ell}{r}\right)\theta+\varepsilon  },
& \text{ if } \frac{\ell}{r}<\frac{3}{2}, \\
X^{\frac{\ell}{r}+\varepsilon},  & \text{ if } \frac{\ell}{r}\ge\frac{3}{2}
\end{cases}.
\eea

Next we evaluate $\Delta_1$ which contributes the main term.
By Theorem 4.2 in \cite{V}, we have
\bea\label{Gr}
G_r(a,0;q) \ll q^{1-\frac{1}{r}+\varepsilon}.
\eea
This together with \eqref{Aj2}, \eqref{Ij2} and \eqref{Psi} yields that the error term generated by extending the interval over $\beta$ to $(-\infty, \infty)$ does not exceed
\bna
&&\sum_{j=0}^{k-1}\sum_{q \leq P}  \sum_{1\le a \le q \atop (a,q)=1} \frac{ |A_{j}(q)| |G_r^{\ell}(a,0;q)||G_s(a,0;q)|}{q^{\ell+1}} \int_{|\beta|>\frac{1}{q Q}} |I_{j}(\beta)||\Psi_r(\beta)|^{\ell}|\Psi_s(\beta)|  \mathrm{d} \beta \nonumber\\
&\ll&\sum_{q\le P}\frac{1}{q^{\ell+2}}\cdot q^{\left(1-\frac{1}{r}\right)\ell}\cdot q^{1-\frac{1}{s}}\cdot q \int_{|\beta|>\frac{1}{q Q}}|\beta|^{-1-\frac{\ell}{r}-\frac{1}{s}} \mathrm{d} \beta\nonumber\\
&\ll&\sum_{q\le P}q^{-\frac{\ell}{r}-\frac{1}{s}}\cdot (qQ)^{\frac{\ell}{r}+\frac{1}{s}}\nonumber\\
&\ll&PQ^{\frac{\ell}{r}+\frac{1}{s}}\\
&\ll&X^{\frac{\ell}{r}+\frac{1}{s}-(\frac{\ell}{r}+\frac{1}{s}-1)\theta+\varepsilon}.
\ena
Therefore,
\bea\label{Delta1}
\Delta_1=\sum_{j=0}^{k=1}\sum_{q\le P}\sum_{1\le a \le q \atop (a,q)=1}
\frac{ A_{j}(q)G_r^{\ell}(a,0;q)G_s(a,0;q)}{q^{\ell+1}}
\int_{-\infty}^{\infty}I_{j}(\beta)\Psi_r^{\ell}(\beta)\Psi_s(\beta)  \mathrm{d} \beta
+O\big(X^{\frac{\ell}{r}+\frac{1}{s}-
\left(\frac{\ell}{r}+\frac{1}{s}-1\right)\theta+\varepsilon}\big).
\eea
Inserting the definitions of $I_j(\beta)$ and $\Psi_r(\beta)$ in \eqref{Ij1} and \eqref{Psi0} into the integral on the right hand side of \eqref{Delta1}, one has
\bna
\begin{aligned}
& \int_{-\infty}^{\infty}I_{j}(\beta)  \Psi_{r}^{\ell}(\beta) \Psi_{s}(\beta)
\mathrm{d} \beta \\
= & \int_{-\infty}^{\infty}\left(\int_{1}^{(\ell+1) X} e(-\beta u_1)
\frac{\log ^{j} u_1}{j!} \mathrm{d} u_1\right)\left(\int_{0}^{X^{\frac{1}{r}}}
e\left(\beta u_2^{r}\right) \mathrm{d} {u_2}\right)^{\ell}
\left(\int_{0}^{X^{\frac{1}{s}}}
e\left(\beta u_3^{s}\right) \mathrm{d} u_3\right)  \mathrm{d} \beta \\
= & \frac{X^{\frac{\ell}{r}+\frac{1}{s}+1}}{j!}
\int_{-\infty}^{\infty}\left(\int_{\frac{1}{X}}^{\ell+1} e(-\beta Xu_1) \log ^{j} (Xu_1)
\mathrm{d} u_1\right)\left(\int_{0}^{1} e\left(\beta X u_2^{r}\right)
\mathrm{d} u_{2}\right)^{\ell}\left(\int_{0}^{1} e\left(\beta X u_3^{s}\right) \mathrm{d} u_3\right)
 \mathrm{d} \beta \\
= & \frac{X^{\frac{\ell}{r}+\frac{1}{s}}}{j!} \int_{-\infty}^{\infty}
\left(\int_{\frac{1}{X}}^{\ell+1} e(-\beta u_1)\log^{j} (X u_1) \mathrm{d} u_1\right)
\left(\int_{0}^{1} e\left(\beta u_2^{r}\right) \mathrm{d} u_2\right)^{\ell}
\left(\int_{0}^{1} e\left(\beta u_3^{s}\right) \mathrm{d} u_3\right)  \mathrm{d} \beta \\
= & \frac{X^{\frac{\ell}{r}+\frac{1}{s}}}{j!} \int_{-\infty}^{\infty}
\left(\int_{0}^{\ell+1} e(-\beta u_1)\log^{j} (X u_1) \mathrm{d} u_1\right)
\left(\int_{0}^{1} e\left(\beta u_2^{r}\right) \mathrm{d} u_2\right)^{\ell}
\left(\int_{0}^{1} e\left(\beta u_3^{s}\right) \mathrm{d} u_3\right)  \mathrm{d} \beta \\
&+O\left(X^{\frac{\ell}{r}+\frac{1}{s}-1+\varepsilon}\right).
\end{aligned}
\ena
where we have used the estimate
\bna
&&\int_{-\infty}^{\infty}\left(\int_{0}^{\frac{1}{X}} e(-\beta u_1)\log^{j} (X u_1) d u_1\right)\left(\int_{0}^{1} e\left(\beta u_2^{r}\right) d u_2\right)^{\ell}\left(\int_{0}^{1} e\left(\beta u_3^{s}\right) d u_3\right)  \mathrm{d} \beta\nonumber \\
&\ll&\int_{0}^{\frac{1}{X}}\left|\log^j(Xu_1)\right|\int_{-\infty}^{\infty}|\beta|^{-\frac{\ell}{r}-\frac{1}{s}} \mathrm{d} \beta\\
&\ll & X^{-1+\varepsilon}
\ena
for any $\varepsilon>0$. Here we have applied the $r$-th derivative test.
Furthermore, by splitting $\log (Xu)$ we write
\bea\label{integral0}
 \int_{-\infty}^{\infty}I_{j}(\beta)  \Psi_{r}^{\ell}(\beta) \Psi_{s}(\beta)
\mathrm{d} \beta
=  \frac{1}{j!} \sum_{i=0}^{j}\binom{j}{i}
\mathfrak{J}_{r,s,\ell, i}
X^{\frac{\ell}{r}+\frac{1}{s}}(\log X)^{j-i}
+O\left(X^{\frac{\ell}{r}+\frac{1}{s}-1+\varepsilon}\right),
\eea
where $\mathfrak{J}_{r, s, \ell, i}$ is defined in \eqref{J}.
Note that by \eqref{Aj2} and \eqref{Gr}, we have
\bna
\sum_{j=0}^{k-1} \sum_{q \le P }  \sum_{1\le a \le q \atop (a,q)=1}
\frac{|A_{j}(q)| |G_r^{\ell}(a, 0; q)| |G_{s}(a, 0; q)|}{q^{\ell+1}} \ll 1 .
\ena
Thus, by inserting \eqref{integral0} into \eqref{Delta1} one has
\bna
\Delta_1&=&\sum_{j=0}^{k-1} \frac{1}{j!} \sum_{q \leq P}
\sum_{1\le a \le q \atop (a,q)=1}\frac{G_r^{\ell}(a,0;q)G_s(a,0;q)}{q^{\ell+1}}
A_{j}(q) \sum_{i=0}^{j}\binom{j}{i} \mathfrak{J}_{r,s, \ell, i}
X^{\frac{\ell}{r}+\frac{1}{s}}(\log X)^{j-i} \\
&&+O\left(X^{\frac{\ell}{r}+\frac{1}{s}-(\frac{\ell}{r}+\frac{1}{s}-1)\theta+\varepsilon}
+X^{\frac{\ell}{r}+\frac{1}{s}-1+\varepsilon}\right).
\ena
Moreover, by extending the summation over $q$
to all positive integers with an error term at most
\bna
&&\sum_{j=0}^{k-1} \frac{1}{j!} \sum_{q>P}
\sum_{1\le a \le q \atop (a,q)=1} \frac{|G_r^{\ell}(a,0;q)G_s(a,0;q)|}{q^{\ell+1}}
 |A_{j}(q)|
\sum_{i=0}^{j}\binom{j}{i} |\mathfrak{J}_{r,s, \ell, i}|
X^{\frac{\ell}{r}+\frac{1}{s}}(\log X)^{j-i} \nonumber\\
&\ll&X^{\frac{\ell}{r}+\frac{1}{s}+\varepsilon}
\sum_{q>P}\frac{q^{\ell-\frac{\ell}{r}}\cdot q^{1-\frac{1}{s}}\cdot q}{q^{\ell+1}\cdot q}\nonumber\\
&\ll&X^{\frac{\ell}{r}+\frac{1}{s}+\varepsilon}P^{-\frac{\ell}{r}-\frac{1}{s}+1}\\
&\ll&X^{\frac{\ell}{r}+\frac{1}{s}-(\frac{\ell}{r}+\frac{1}{s}-1)\theta+\varepsilon},
\ena
we derive
\bea\label{Delta1-sum}
\Delta_1=\sum_{j=0}^{k-1} \mathfrak{S}_{k,r ,s ,\ell, j}
\sum_{i=0}^{j}\binom{j}{i} \mathfrak{J}_{r,s, \ell, i}
X^{\frac{\ell}{r}+\frac{1}{s}}(\log X)^{j-i}
+O\left(X^{\frac{\ell}{r}+\frac{1}{s}-\left(\frac{\ell}{r}+\frac{1}{s}-1\right)\theta+\varepsilon}
+X^{\frac{\ell}{r}+\frac{1}{s}-1+\varepsilon}\right),
\eea
where $\mathfrak{S}_{k,r ,s ,\ell, j}$ is defined in \eqref{S}.
This combined with\eqref{R0-1} and \eqref{Delta2} yields
\bea\label{R0}
&&\sum_{q \leqslant P} \int_{|\beta| \leqslant \frac{1}{q Q}}
\mathbf{R}_{0}(q, \beta) T_{s}\left(\frac{a}{q}+\beta, X\right)
\mathrm{d} \beta\nonumber\\
&=&\sum_{j=0}^{k-1}  \mathfrak{S}_{k, r, s, \ell, j} \sum_{i=0}^{j}
\binom{j}{i} \mathfrak{J}_{r, s,\ell, i}
X^{\frac{\ell}{r}+\frac{1}{s}}(\log X)^{j-i}\nonumber\\&&+
O\Big(X^{\frac{\ell}{r}+\varepsilon}+X^{\frac{\ell}{r}+\left(\frac{3}{2}-\frac{\ell}{r}\right)\theta+\varepsilon  }+
+X^{\frac{\ell}{r}+\frac{1}{s}-\left(\frac{\ell}{r}+\frac{1}{s}-1\right)\theta+\varepsilon}\Big).
\eea


\subsection{Contribution from $\mathbf{R}_i(q,\beta)$ with $1\le i \le \ell $}

By the definition of $\mathbf{R}_{i}(q, \beta)$ in \eqref{Ri} and \eqref{Tr}, we have
\bna
&&\sum_{q \leqslant P} \int_{|\beta| \leqslant \frac{1}{q Q}}
\mathbf{R}_{i}(q, \beta) T_{s}\left(\frac{a}{q}+\beta, X\right)  \mathrm{d} \beta \nonumber\\
&= &\sum_{j=0}^{k-1} \sum_{q \leqslant P} \sum_{1\le a \le q \atop (a,q)=1}
\frac{A_{j}(q) G_r^{\ell-i}(a, 0;q) G_{s}(a, 0; q)}{q^{\ell+1-i}}
\int_{|\beta|\leq\frac{1}{q Q}} I_{j}(\beta) \Psi_{r}^{\ell-i}(\beta) \Psi_{s}(\beta)E_r(q,\beta)^i \mathrm{d} \beta\\
&& + \sum_{j=0}^{k-1} \sum_{q \le P} \sum_{1\le a \le q \atop (a,q)=1}
 \frac{A_{j}(q) G_r^{\ell-i}(a, 0; q)}{q^{\ell-i}} \int_{|\beta|\le\frac{1}{qQ}} I_{j}(\beta) \Psi_{r}^{\ell-i}(\beta)
E_r(q,\beta)^{i} E_s(q,\beta) \mathrm{d} \beta \\
&:= & \Delta_3+\Delta_4,
\ena
say.

By \eqref{Aj2},  \eqref{Ij2} and \eqref{Gr}, we have
\bna
\Delta_3&\ll&X^{\frac{\ell-i}{r}+\frac{1}{s}}\sum_{q \leqslant P}
q^{-\frac{\ell-i}{r}-\frac{1}{s}+\frac{i}{2}+\varepsilon}
\int_{|\beta|\le\frac{1}{qQ}}\min\{X,|\beta^{-1}|\}
\left(1+|\beta |X \right)^{\frac{i}{2}-\frac{\ell-i}{r}-\frac{1}{s}}
\mathrm{d} \beta
\nonumber\\ &\ll&X^{\frac{\ell-i}{r}+\frac{1}{s}}
\sum_{q \leqslant P}q^{-\frac{\ell-i}{r}-\frac{1}{s}+\frac{i}{2}+\varepsilon}
\int_{|\beta|\le\frac{1}{X}}X \mathrm{d} \beta\nonumber\\
&&+X^{\frac{i}{2}}\sum_{q \leqslant P}
q^{-\frac{\ell-i}{r}-\frac{1}{s}+\frac{i}{2}+\varepsilon}
\int_{\frac{1}{X}< |\beta|\le\frac{1}{qQ}}
|\beta|^{\frac{i}{2}-\frac{\ell-i}{r}-\frac{1}{s}-1}\mathrm{d} \beta\\
&\ll&\begin{cases}
X^{\frac{\ell}{r}+\frac{1}{s}
-\left(\frac{\ell}{r}+\frac{1}{s}-1\right)\theta
-\left(\frac{1}{r}-(\frac{1}{2}+\frac{1}{r})\theta\right)i+\varepsilon},
& \text{ if } \frac{\ell}{r}< 1+i\left( \frac{1}{2}+\frac{1}{r}\right)-\frac{1}{s}, \\
\\
X^{\frac{\ell-i}{r}+\frac{1}{s}+\varepsilon} ,
& \text{ if } \frac{\ell}{r}\ge 1+i\left( \frac{1}{2}+\frac{1}{r}\right)-\frac{1}{s},
\end{cases}
\ena
and similarly,
\bna
\Delta_4&\ll&X^{\frac{\ell-i}{r}}\sum_{q \leqslant P}q^{-\frac{\ell-i}{r}+\frac{i+1}{2}+\varepsilon}\int_{|\beta|\le\frac{1}{qQ}}\min\{X,|\beta^{-1}|\}\left(1+|\beta |X \right)^{\frac{i+1}{2}-\frac{\ell-i}{r}} \mathrm{d} \beta
\nonumber\\ &\ll&X^{\frac{\ell-i}{r}}\sum_{q \leqslant P}q^{-\frac{\ell-i}{r}+\frac{i+1}{2}+\varepsilon}\int_{|\beta|\le\frac{1}{X}}X \mathrm{d} \beta\nonumber\\
&&+X^{\frac{i+1}{2}}\sum_{q \leqslant P}q^{-\frac{\ell-i}{r}+\frac{i+1}{2}+\varepsilon}
\int_{\frac{1}{X}< |\beta|\le\frac{1}{qQ}}
|\beta|^{\frac{i+1}{2}-\frac{\ell-i}{r}-1}\mathrm{d} \beta\\
&\ll&\begin{cases}
X^{\frac{\ell}{r}
+\left(\frac{3}{2}-\frac{\ell}{r}\right)\theta
-\left(\frac{1}{r}-(\frac{1}{2}+\frac{1}{r})\theta\right)i+\varepsilon},
& \text{ if } \frac{\ell}{r}< \frac{3}{2}+i\left( \frac{1}{2}+\frac{1}{r}\right),  \\
\\
X^{\frac{\ell-i}{r}+\varepsilon} ,
& \text{ if } \frac{\ell}{r}\ge \frac{3}{2}+i\left( \frac{1}{2}+\frac{1}{r}\right).
\end{cases}
\ena
Recall that $i\geq 1$, $\frac{1}{2}+\frac{1}{r}\leq 1$ and $\theta<\frac{1}{r}$.
Assembling the above estimates, we get
\bea\label{Ri-final}
&&\sum_{i=1}^{\ell}\binom{\ell}{i}
\sum_{q \leqslant P} \int_{|\beta| \leqslant \frac{1}{q Q}}
\mathbf{R}_{i}(q, \beta) T_{s}\left(\frac{a}{q}+\beta, X\right)
\mathrm{d} \beta\nonumber \\
&\ll& X^{\frac{\ell}{r}+\frac{1}{s}
-\left(\frac{\ell}{r}+\frac{1}{s}-1\right)\theta
-\left(\frac{1}{r}-(\frac{1}{2}+\frac{1}{r})\theta\right)+\varepsilon}
+X^{\frac{\ell}{r}
+\left(\frac{3}{2}-\frac{\ell}{r}\right)\theta
-\left(\frac{1}{r}-\left(\frac{1}{2}+\frac{1}{r}\right)\theta\right)+\varepsilon}
+X^{\frac{\ell-1}{r}+\frac{1}{s}+\varepsilon} .
\eea
\subsection{Conclusion}
By \eqref{med-estimate}, \eqref{O1}, \eqref{O2}, \eqref{R0} and \eqref{Ri-final}, we have
\bea\label{final-estimate}
&& \int_{\mathfrak{M}} T_{r}^{\ell}(\alpha, X) T_{s}(\alpha, X) F(\alpha, X)  \mathrm{d} \alpha \nonumber\\
&=& \sum_{j=0}^{k-1}  \mathfrak{S}_{k, r, s, \ell, j} \sum_{i=0}^{j}
\binom{j}{i} \mathfrak{J}_{r, s,\ell, i}
X^{\frac{\ell}{r}+\frac{1}{s}}(\log X)^{j-i}\nonumber\\&&+
O\Big(X^{\frac{\ell}{r}+\varepsilon}+X^{\frac{\ell}{r}
+\left(\frac{3}{2}-\frac{\ell}{r}\right)\theta+\varepsilon  }
+X^{\frac{\ell}{r}+\frac{1}{s}
-\left(\frac{\ell}{r}+\frac{1}{s}-1\right)\theta+\varepsilon}
+X^{\frac{\ell-1}{r}+\frac{1}{s}+\varepsilon} +\mathbf{\Delta}\Big), \nonumber\\
\eea
where
\bea\label{fff}
\mathbf{\Delta}=\begin{cases}
X^{\frac{\ell}{r}-\frac{\ell}{2^{r}}(1-\frac{1}{s})+\frac{k-1}{k+2}+\theta+\varepsilon},
& \text{ if } 2^{r-1}\leq \ell<2^{r},  \\
\\
X^{\frac{\ell}{r}-(1-\frac{1}{s})+\frac{k-1}{k+2}+\theta+\varepsilon},
& \text{ if } \ell \geq 2^{r}.
\end{cases}
\eea
Taking $\theta$ as in Section 4, then Proposition \ref{pro} follows.
\section{The constant $\delta_{k, r,s, \ell}$}\label{delta}

We first write
\bea\label{zong-fen1}
\sum_{1\leq n_1,n_2, \dots ,n_{\ell}\leq X^{\frac{1}{r}}\atop
1\leq n_{\ell+1}\le X^{\frac{1}{s}}}\tau_k(n_1^r+n_2^r+\dots +n_{\ell}^r+n_{\ell+1}^s)
= \mathbf{M}+
O(\mathbf{E}),
\eea
where $\mathbf{E}$ is the sum of error terms
of order less than $X^{\frac{\ell}{r}+\frac{1}{s}}$ and
$$
\mathbf{M}=\sum_{j=0}^{k-1}  \mathfrak{S}_{k, r, s, \ell, j} \sum_{i=0}^{j}
\binom{j}{i} \mathfrak{J}_{r, s,\ell, i}
X^{\frac{\ell}{r}+\frac{1}{s}}(\log X)^{j-i}.
$$

\subsection{The case of $2^{r-1}\le \ell<2^{r}$  except for $\ell=2^{r}-1$ $(r=s)$}
\qquad

By \eqref{final-estimate} and \eqref{fff}, we have
\bea\label{final-estimate1}
&& \int_{\mathfrak{M}} T_{r}^{\ell}(\alpha, X) T_{s}(\alpha, X) F(\alpha, X)  \mathrm{d} \alpha \nonumber\\
&=& \mathbf{M}+
O\Big(X^{\frac{\ell}{r}+\varepsilon}+X^{\frac{\ell}{r}
+\left(\frac{3}{2}-\frac{\ell}{r}\right)\theta+\varepsilon  }
+X^{\frac{\ell}{r}+\frac{1}{s}
-\left(\frac{\ell}{r}+\frac{1}{s}-1\right)\theta+\varepsilon}
+X^{\frac{\ell-1}{r}+\frac{1}{s}+\varepsilon}
+X^{\frac{\ell}{r}-\frac{\ell}{2^{r}}\left(1-\frac{1}{s}\right)+\frac{k-1}{k+2}+\theta+\varepsilon}\Big). \nonumber\\
\eea

Since $\mathbf{E}$ is the sum of error terms
of order less than $X^{\frac{\ell}{r}+\frac{1}{s}}$,
we must have $\frac{\ell}{2^{r}}(1-\frac{1}{s})-\frac{k-1}{k+2}+\frac{1}{s}>\theta$.
Assuming $\theta>0$, we have $\frac{\ell}{2^{r}}(1-\frac{1}{s})-\frac{k-1}{k+2}+\frac{1}{s}>0$,
which means $k<\frac{3s\cdot2^r}{(s-1)(2^r-\ell)}-2$.

(\romannumeral1) $2\le r\le 7$, $2\le s\le 7$.
Recall \eqref{romannumeral1} which we relabel as
\bea\label{romannumeral1*}
\int_\mathfrak{m}T_r^\ell(\alpha,X)T_s(\alpha,X)F(\alpha,X)\mathrm{d}\alpha
\ll X^{\frac{\ell}{r}+\frac{1}{s}-
\left(\frac{\ell}{2^{r-1}}+\frac{1}{2^{s-1}}-1\right)\theta+\varepsilon}.
\eea
Note that $2^{r-1}\geq 2$ for $r\geq 2$. So the upper bound in \eqref{romannumeral1*}
dominates the third term in \eqref{final-estimate1}. By \eqref{zong-fen} and
\eqref{zong-fen1}-\eqref{romannumeral1*}, we have
\bea\label{err1}
\mathbf{E}=
X^{\frac{\ell}{r}+\varepsilon}+X^{\frac{\ell}{r}
+\left(\frac{3}{2}-\frac{\ell}{r}\right)\theta+\varepsilon  }
+X^{\frac{\ell}{r}+\frac{1}{s}-
\left(\frac{\ell}{2^{r-1}}+\frac{1}{2^{s-1}}-1\right)\theta+\varepsilon}
+X^{\frac{\ell-1}{r}+\frac{1}{s}+\varepsilon}
+X^{\frac{\ell}{r}-\frac{\ell}{2^{r}}\left(1-\frac{1}{s}\right)
+\frac{k-1}{k+2}+\theta+\varepsilon}.
\eea
For $\ell/r\ge \frac{3}{2}$, we choose
\bna
\frac{\ell}{r}+\frac{1}{s}-
\left(\frac{\ell}{2^{r-1}}+\frac{1}{2^{s-1}}-1\right)\theta=\frac{\ell}{r}-\frac{\ell}{2^{r}}\left(1-\frac{1}{s}\right)
+\frac{k-1}{k+2}+\theta,
\ena
i.e.,
\bna
\theta=\frac{\big(k+2\big)\big( 2^r+\ell(s-1)\big)-2^r\big(k-1\big)s}{2s\big(k+2\big)(\ell+2^{r-s})}:=\theta_a.
\ena
Take $\theta_i=\frac{1}{k+r}$. Then
\bna
E\ll X^{\frac{\ell}{r}+\frac{1}{s}-\delta_{k, r, s, \ell}+\varepsilon}
\ena
with
\bna
{\delta_{k, r, s, \ell}}=\begin{cases}
\min\left\{\frac{1}{r},\frac{1}{s},\left(\frac{\ell}{2^{r-1}}+\frac{1}{2^{s-1}}-1\right)\theta_a\right\},
& \text{ if } \theta_a\leq \theta_i,  \\
\\
\min\left\{\frac{1}{r},\frac{1}{s},\left(\frac{\ell}{2^{r-1}}+\frac{1}{2^{s-1}}-1\right)\theta_i\right\},
& \text{ if } \theta_a > \theta_i.
\end{cases}
\ena
For $\ell/r<\frac{3}{2}$, we have two choices.

\begin{itemize}
  \item $r=2$, $\ell=2$.
 In this case, $\frac{\ell}{r}=1$, $\frac{\ell}{2^{r-1}}=1$.
\end{itemize}
 We note that
\bna
\frac{1}{s}-\frac{1}{2}\theta_a\ge\frac{1}{2^{s-1}}\theta_a.
\ena
Then
\bna
E\ll X^{\frac{\ell}{r}+\frac{1}{s}-\delta_{k, r, s, \ell}+\varepsilon}
\ena
with
\bna
{\delta_{k, r, s, \ell}}=\begin{cases}
\frac{1}{2^{s-1}}\theta_a,
& \text{ if } \theta_a\leq \theta_i,  \\
\\
\frac{1}{2^{s-1}}\theta_i,
& \text{ if } \theta_a > \theta_i,
\end{cases}
\ena
where
\bna
\theta_a=\frac{(k+2)(s+1)-2(k-1)s}{2s(k+2)(1+2\cdot2^{-s})}.
\ena
Taking $s=2$, we have
\bna
\theta_a=\frac{10-k}{6k+12},\qquad \delta_{k, r, s, \ell}=\frac{10-k}{12k+24}.
\ena
According to our conditions for $k$, this leads directly to the
results for $4\le k \le9$ in Hu and L{\"u} \cite{HL}.

\begin{itemize}\item
$r=3$, $\ell=4$. In this case, $\frac{\ell}{r}=\frac{4}{3}$, $\frac{\ell}{2^{r-1}}=1$.
\end{itemize}
 We note that
\bna
\frac{1}{s}-\frac{1}{6}\theta_a\ge\frac{1}{2^{s-1}}\theta_a.
\ena
Then
\bna
E\ll X^{\frac{\ell}{r}+\frac{1}{s}-\delta_{k, r, s, \ell}+\varepsilon}
\ena
with
\bna
{\delta_{k, r, s, \ell}}=\begin{cases}
\frac{1}{2^{s-1}}\theta_a,
& \text{ if } \theta_a\leq \theta_i,  \\
\\
\frac{1}{2^{s-1}}\theta_i,
& \text{ if } \theta_a > \theta_i,
\end{cases}
\ena
where
\bna
\theta_a=\frac{(k+2)(s+1)-2(k-1)s}{2s(k+2)(1+2\cdot2^{-s})}.
\ena

 (\romannumeral2). $2\le r\le 7$, $s\ge 8$.
Recall \eqref{romannumeral2} which we relabel as
\bea\label{romannumeral2*}
\int_\mathfrak{m}T_r^\ell(\alpha,X)T_s(\alpha,X)F(\alpha,X)\mathrm{d}\alpha
\ll X^{\frac{\ell}{r}+\frac{1}{s}-\theta\left(\frac{\ell}{2^{r-1}}+\frac{1}{2s(s-1)}-1\right)+\varepsilon}.
\eea
Note that $2^{r-1}\geq 2$ for $r\geq 2$. So the upper bound in \eqref{romannumeral2*}
dominates the third term in \eqref{final-estimate1}. By \eqref{zong-fen}
\eqref{zong-fen1}, \eqref{final-estimate1} and \eqref{romannumeral2*}, we have
\bea\label{err2}
\mathbf{E}=
X^{\frac{\ell}{r}+\varepsilon}+X^{\frac{\ell}{r}
+\left(\frac{3}{2}-\frac{\ell}{r}\right)\theta+\varepsilon  }
+X^{\frac{\ell}{r}+\frac{1}{s}-
\left(\frac{\ell}{2^{r-1}}+\frac{1}{2s(s-1)}-1\right)\theta+\varepsilon}
+X^{\frac{\ell-1}{r}+\frac{1}{s}+\varepsilon}
+X^{\frac{\ell}{r}-\frac{\ell}{2^{r}}\left(1-\frac{1}{s}\right)
+\frac{k-1}{k+2}+\theta+\varepsilon}.
\eea
For $\ell/r\ge\frac{3}{2}$, we choose
\bna
\frac{\ell}{r}+\frac{1}{s}-
\left(\frac{\ell}{2^{r-1}}+\frac{1}{2s(s-1)}-1\right)\theta=\frac{\ell}{r}-\frac{\ell}{2^{r}}\left(1-\frac{1}{s}\right)
+\frac{k-1}{k+2}+\theta,
\ena
i.e.,
\bna
\theta=\frac{\big(k+2 \big)\big(s-1 \big)\big( 2^r+\ell(s-1)\big)-2^r\big( k-1\big)\big( s-1\big)s}{\big(k+2 \big)\big(2s(s-1)\ell+2^{r-1} \big)}:=\theta_b.
\ena
Then
\bna
E\ll X^{\frac{\ell}{r}+\frac{1}{s}-\delta_{k, r, s, \ell}+\varepsilon}
\ena
with
\bna
{\delta_{k, r, s, \ell}}=\begin{cases}
\min\left\{\frac{1}{r},\frac{1}{s},\left(\frac{\ell}{2^{r-1}}+\frac{1}{2s(s-1)}-1\right)\theta_b\right\},
& \text{ if } \theta_b\leq \theta_i,  \\
\\
\min\left\{\frac{1}{r},\frac{1}{s},\left(\frac{\ell}{2^{r-1}}+\frac{1}{2(s-1)}-1\right)\theta_i\right\},
& \text{ if } \theta_b > \theta_i.
\end{cases}
\ena
For $\ell/r<\frac{3}{2}$, we have two choices.

\begin{itemize}
\item
 $r=2$, $\ell=2$. In this case, $\frac{\ell}{r}=1$, $\frac{\ell}{2^{r-1}}=1$.
\end{itemize}
 We note that
\bna
\frac{1}{s}-\frac{1}{2}\theta_b\ge\frac{1}{2s(s-1)}\theta_b.
\ena
Then
\bna
E\ll X^{\frac{\ell}{r}+\frac{1}{s}-\delta_{k, r, s, \ell}+\varepsilon}
\ena
with
\bna
{\delta_{k, r, s, \ell}}=\begin{cases}
\frac{1}{2s(s-1)}\theta_{b},
& \text{ if } \theta_b\leq \theta_i,  \\
\\
\frac{1}{2s(s-1)}\theta_{i},
& \text{ if } \theta_b > \theta_i,
\end{cases}
\ena
where
\bna
\theta_{b}=\frac{\big(s-1\big)\big((k+2 )(s+1 )-2( k-1)s\big)}{\big(k+2 \big)\big(2s(s-1)+1 \big)}.
\ena
\begin{itemize}\item
$r=3$, $\ell=4$. In this case, $\frac{\ell}{r}=\frac{4}{3}$, $\frac{\ell}{2^{r-1}}=1$.
\end{itemize}
Note that
\bna
\frac{1}{s}-\frac{1}{6}\theta_b\ge\frac{1}{2s(s-1)}\theta_b.
\ena
Then
\bna
E\ll X^{\frac{\ell}{r}+\frac{1}{s}-\delta_{k, r, s, \ell}+\varepsilon}
\ena
with
\bna
{\delta_{k, r, s, \ell}}=\begin{cases}
\frac{1}{2s(s-1)}\theta_{b},
& \text{ if } \theta_b\leq \theta_i,  \\
\\
\frac{1}{2s(s-1)}\theta_{i},
& \text{ if } \theta_b > \theta_i,
\end{cases}
\ena
where
\bna
\theta_{b}=\frac{\big(s-1\big)\big((k+2 )(s+1 )-2( k-1)s\big)}{\big(k+2 \big)\big(2s(s-1)+1 \big)}.
\ena
(\romannumeral3). $r\ge8$, $2\le s\le 7$.
Recall \eqref{romannumeral3} which we relabel as
\bea\label{romannumeral3*}
\int_\mathfrak{m}T_r^\ell(\alpha,X)T_s(\alpha,X)F(\alpha,X)\mathrm{d}\alpha
\ll X^{\frac{\ell}{r}+\frac{1}{s}-\theta\left(\frac{\ell-2^{r-1}}{2r(r-1)}+\frac{1}{2^{s-1}}\right)+\varepsilon}.
\eea
Note that $2^{r-1}> 2r(r-1)$ for $r\geq 8$. So the upper bound in \eqref{romannumeral3*}
dominates the third term in \eqref{final-estimate1}. By \eqref{zong-fen},
\eqref{zong-fen1}, \eqref{final-estimate1}, \eqref{romannumeral3*} and the implied condition $\ell/r\ge\frac{3}{2}$, we have
\bea\label{err3}
\mathbf{E}=
X^{\frac{\ell}{r}+\varepsilon}
+X^{\frac{\ell}{r}+\frac{1}{s}-\theta\left(\frac{\ell-2^{r-1}}{2r(r-1)}+\frac{1}{2^{s-1}}\right)+\varepsilon}
+X^{\frac{\ell-1}{r}+\frac{1}{s}+\varepsilon}
+X^{\frac{\ell}{r}-\frac{\ell}{2^{r}}\left(1-\frac{1}{s}\right)
+\frac{k-1}{k+2}+\theta+\varepsilon}.
\eea
We choose
\bna
\frac{\ell}{r}+\frac{1}{s}-\theta\left(\frac{\ell-2^{r-1}}{2r(r-1)}+\frac{1}{2^{s-1}}\right)=\frac{\ell}{r}-\frac{\ell}{2^{r}}\left(1-\frac{1}{s}\right)
+\frac{k-1}{k+2}+\theta,
\ena
i.e.,
\bna
\theta=\frac{r\Big(k+2 \Big)\Big(r-1 \Big)\Big( 2^r+\ell\big(s-1\big)\Big)-2^r\Big( k-1\Big)\Big( r-1\Big)rs}{s\Big(k+2 \Big)\Big(2^{r-s+1}\big(r-1\big)r+2^{r-1}\big(\ell-2^{r-1}+2(r-1)r\big) \Big)}:=\theta_c.
\ena
Then
\bna
E\ll X^{\frac{\ell}{r}+\frac{1}{s}-\delta_{k, r, s, \ell}+\varepsilon}
\ena
with
\bna
{\delta_{k, r, s, \ell}}=\begin{cases}
\min\left\{\frac{1}{r},\frac{1}{s},\left(\frac{\ell-2^{r-1}}{2r(r-1)}+\frac{1}{2^{s-1}}\right)\theta_c\right\},
& \text{ if } \theta_c\leq \theta_i,  \\
\\
\min\left\{\frac{1}{r},\frac{1}{s},\left(\frac{\ell-2^{r-1}}{2r(r-1)}+\frac{1}{2^{s-1}}\right)\theta_i\right\},
& \text{ if } \theta_c > \theta_i.
\end{cases}
\ena
(\romannumeral4). $r\ge8$, $s\ge 8$.
Recall \eqref{romannumeral4} which we relabel as
\bea\label{romannumeral4*}
\int_\mathfrak{m}T_r^\ell(\alpha,X)T_s(\alpha,X)F(\alpha,X)\mathrm{d}\alpha
\ll X^{\frac{\ell}{r}+\frac{1}{s}-\theta\left(\frac{\ell-2^{r-1}}{2r(r-1)}+\frac{1}{2s(s-1)}\right)+\varepsilon}.
\eea
Note that $2^{r-1}>2r(r-1)$ for $r\geq 8$. So the upper bound in \eqref{romannumeral4*}
dominates the third term in \eqref{final-estimate1}. By \eqref{zong-fen}
\eqref{zong-fen1}, \eqref{final-estimate1}, \eqref{romannumeral4*} and the implied condition $\ell/r\ge\frac{3}{2}$, we have
\bea\label{err4}
\mathbf{E}=
X^{\frac{\ell}{r}+\varepsilon}
+X^{\frac{\ell}{r}+\frac{1}{s}-\theta\left(\frac{\ell-2^{r-1}}{2r(r-1)}+\frac{1}{2s(s-1)}\right)+\varepsilon}
+X^{\frac{\ell-1}{r}+\frac{1}{s}+\varepsilon}
+X^{\frac{\ell}{r}-\frac{\ell}{2^{r}}\left(1-\frac{1}{s}\right)
+\frac{k-1}{k+2}+\theta+\varepsilon}.
\eea
We choose
\bna
\frac{\ell}{r}+\frac{1}{s}-\theta\left(\frac{\ell-2^{r-1}}{2r(r-1)}+\frac{1}{2s(s-1)}\right)=\frac{\ell}{r}-\frac{\ell}{2^{r}}\left(1-\frac{1}{s}\right)
+\frac{k-1}{k+2}+\theta,
\ena
i.e.,
\bna
\theta=\frac{2r\Big(r-1 \Big)\Big(s-1 \Big)\Big(k+2 \Big)\Big( 2^r+\ell\big(s-1\big)\Big)-2^{r+1}\Big( k-1\Big)\Big( s-1\Big)\Big( r-1\Big)rs}{2^r\Big(k+2 \Big)\Big(\big(r-1\big)r+s\big(s-1\big)\big(\ell-2^{r-1}+2(r-1)r\big) \Big)}:=\theta_d.
\ena
Then
\bna
E\ll X^{\frac{\ell}{r}+\frac{1}{s}-\delta_{k, r, s, \ell}+\varepsilon}
\ena
with
\bna
{\delta_{k, r, s, \ell}}=\begin{cases}
\min\left\{\frac{1}{r},\frac{1}{s},\left(\frac{\ell-2^{r-1}}{2r(r-1)}+\frac{1}{2s(s-1)}\right)\theta_d\right\},
& \text{ if } \theta_d\leq \theta_i,  \\
\\
\min\left\{\frac{1}{r},\frac{1}{s},\left(\frac{\ell-2^{r-1}}{2r(r-1)}+\frac{1}{2s(s-1)}\right)\theta_i\right\},
& \text{ if } \theta_d > \theta_i.
\end{cases}
\ena

\subsection{The case of $\ell \ge 2^{r}$ or $\ell=2^{r}-1$ $(r=s)$}
\qquad\\
By \eqref{final-estimate} and \eqref{fff}, we have
\bea\label{final-estimate2}
&& \int_{\mathfrak{M}} T_{r}^{\ell}(\alpha, X) T_{s}(\alpha, X) F(\alpha, X)  \mathrm{d} \alpha \nonumber\\
&=& \mathbf{M}+
O\Big(X^{\frac{\ell}{r}+\varepsilon}+X^{\frac{\ell}{r}
+\left(\frac{3}{2}-\frac{\ell}{r}\right)\theta+\varepsilon  }
+X^{\frac{\ell}{r}+\frac{1}{s}
-\left(\frac{\ell}{r}+\frac{1}{s}-1\right)\theta+\varepsilon}
+X^{\frac{\ell-1}{r}+\frac{1}{s}+\varepsilon}
+X^{\frac{\ell}{r}-\left(1-\frac{1}{s}\right)+\frac{k-1}{k+2}+\theta+\varepsilon}. \nonumber\\
\eea

Note that the condition $\ell\ge 2^{r}-1$ $(r\ge 2)$ implies $\ell/r\ge\frac{3}{2}$. So the second term is absorbed in the first term in \eqref{final-estimate2}.

(\romannumeral1) $2\le r\le 7$, $2\le s\le 7$.
 By \eqref{zong-fen},
\eqref{zong-fen1}, \eqref{romannumeral1*} and \eqref{final-estimate2}, we have
\bea\label{err1*}
\mathbf{E}=
X^{\frac{\ell}{r}+\varepsilon}
+X^{\frac{\ell}{r}+\frac{1}{s}-
\left(\frac{\ell}{2^{r-1}}+\frac{1}{2^{s-1}}-1\right)\theta+\varepsilon}
+X^{\frac{\ell-1}{r}+\frac{1}{s}+\varepsilon}
+X^{\frac{\ell}{r}-(1-\frac{1}{s})+\frac{k-1}{k+2}+\theta+\varepsilon}.
\eea
We choose
\bna
\frac{\ell}{r}+\frac{1}{s}-\left(\frac{\ell}{2^{r-1}}+\frac{1}{2^{s-1}}-1\right)\theta=\frac{\ell}{r}-(1-\frac{1}{s})+\frac{k-1}{k+2}+\theta,
\ena
i.e,
\bna
\theta=\frac{3\cdot 2^{r-1}}{(k+2)(\ell+2^{r-s})}:=\theta_{e}.
\ena
Then
\bna
E\ll X^{\frac{\ell}{r}+\frac{1}{s}-\delta_{k, r, s, \ell}+\varepsilon}
\ena
with
\bna
{\delta_{k, r, s, \ell}}=\begin{cases}
\min\left\{\frac{1}{r},\frac{1}{s},\left(\frac{\ell}{2^{r-1}}+\frac{1}{2^{s-1}}-1\right)\theta_{e}\right\},
& \text{ if } \theta_e\leq \theta_i,  \\
\\
\min\left\{\frac{1}{r},\frac{1}{s},\left(\frac{\ell}{2^{r-1}}+\frac{1}{2^{s-1}}-1\right)\theta_{i}\right\},
& \text{ if } \theta_e > \theta_i.
\end{cases}
\ena
We take $r=2$, $s=2$.

For $\ell=3$ and $\ell=4$,
\bna
\theta_i=\frac{1}{k+2},\qquad\delta_{k, r, s, \ell}=\frac{(\ell-1)}{2(k+2)}.
\ena
 For $\ell\ge 5$,
\bna
\theta_e=\frac{6}{(k+2)(\ell+1)},\qquad\delta_{k, r, s, \ell}=\frac{3(\ell-1)}{(k+2)(\ell+1)}.
\ena
which recovers the results of Hu and L{\"u} \cite{HL} for $\ell \ge 5$.
The case  of $\ell=3$ and $\ell=4$ is weak for the reason that we specify
$\theta\leq \frac{1}{k+2}$, and then the preceding term is absorbed by the following term in \eqref{O2}. In other words, if we take $\theta=\theta_e$ for $\ell=3$ and $\ell=4$,
\bna
X^{\frac{\ell}{r}-(1-\frac{1}{s})+\frac{k-1}{k+2}+\theta_e}\ll X^{\frac{\ell}{r}-(1-\frac{1}{s})+k\theta_e}.
\ena
(\romannumeral2) $2\le r\le 7$, $s\ge8$.
 By \eqref{zong-fen},
\eqref{zong-fen1}, \eqref{romannumeral2*} and \eqref{final-estimate2}, we have
\bea\label{err2*}
\mathbf{E}=
X^{\frac{\ell}{r}+\varepsilon}
+X^{\frac{\ell}{r}+\frac{1}{s}-
\left(\frac{\ell}{2^{r-1}}+\frac{1}{2s(s-1)}-1\right)\theta+\varepsilon}
+X^{\frac{\ell-1}{r}+\frac{1}{s}+\varepsilon}
+X^{\frac{\ell}{r}-\left(1-\frac{1}{s}\right)+\frac{k-1}{k+2}+\theta+\varepsilon}.
\eea
We choose
\bna
\frac{\ell}{r}+\frac{1}{s}-
\left(\frac{\ell}{2^{r-1}}+\frac{1}{2s(s-1)}-1\right)\theta=\frac{\ell}{r}-\left(1-\frac{1}{s}\right)+\frac{k-1}{k+2}+\theta,
\ena
i.e,
\bna
\theta=\frac{3\cdot 2^{r}\big(s-1\big)s}{\big(k+2\big)\big(2s(s-1)\ell+2^{r-1}\big)}:=\theta_{f}.
\ena
Then
\bna
E\ll X^{\frac{\ell}{r}+\frac{1}{s}-\delta_{k, r, s, \ell}+\varepsilon}
\ena
with
\bna
{\delta_{k, r, s, \ell}}=\begin{cases}
\min\left\{\frac{1}{r},\frac{1}{s},\left(\frac{\ell}{2^{r-1}}+\frac{1}{2s(s-1)}-1\right)\theta_{f}\right\},
& \text{ if } \theta_f\leq \theta_i,  \\
\\
\min\left\{\frac{1}{r},\frac{1}{s},\left(\frac{\ell}{2^{r-1}}+\frac{1}{2s(s-1)}-1\right)\theta_{i}\right\},
& \text{ if } \theta_f > \theta_i.
\end{cases}
\ena
(\romannumeral3) $r\ge8$, $2\le s \le 7$.
 By \eqref{zong-fen},
\eqref{zong-fen1}, \eqref{romannumeral3*} and \eqref{final-estimate2}, we have
\bea\label{err3*}
\mathbf{E}=
X^{\frac{\ell}{r}+\varepsilon}
+X^{\frac{\ell}{r}+\frac{1}{s}-
\left(\frac{\ell-2^{r-1}}{2r(r-1)}+\frac{1}{2^{s-1}}\right)\theta+\varepsilon}
+X^{\frac{\ell-1}{r}+\frac{1}{s}+\varepsilon}
+X^{\frac{\ell}{r}-\left(1-\frac{1}{s}\right)+\frac{k-1}{k+2}+\theta+\varepsilon}.
\eea
We choose
\bna
\frac{\ell}{r}+\frac{1}{s}-
\left(\frac{\ell-2^{r-1}}{2r(r-1)}+\frac{1}{2^{s-1}}\right)\theta=\frac{\ell}{r}-\left(1-\frac{1}{s}\right)+\frac{k-1}{k+2}+\theta,
\ena
i.e,
\bna
\theta=\frac{3\cdot 2^{s}\Big(r-1\Big)r}{\Big(k+2\Big)\Big(2\big(r-1\big)r+2^{s-1}\big(\ell-2^{r-1}+2(r-1)r\big)\Big)}:=\theta_{g}.
\ena
Then
\bna
E\ll X^{\frac{\ell}{r}+\frac{1}{s}-\delta_{k, r, s, \ell}+\varepsilon}
\ena
with
\bna
{\delta_{k, r, s, \ell}}=\begin{cases}
\min\left\{\frac{1}{r},\frac{1}{s},\left(\frac{\ell-2^{r-1}}{2r(r-1)}+\frac{1}{2^{s-1}}\right)\theta_g\right\},
& \text{ if } \theta_g\leq \theta_i,  \\
\\
\min\left\{\frac{1}{r},\frac{1}{s},\left(\frac{\ell-2^{r-1}}{2r(r-1)}+\frac{1}{2^{s-1}}\right)\theta_i\right\},
& \text{ if } \theta_g > \theta_i.
\end{cases}
\ena
(\romannumeral4) $r\ge8$, $s\ge8$.
 By \eqref{zong-fen},
\eqref{zong-fen1}, \eqref{romannumeral4*} and \eqref{final-estimate2}, we have
\bea\label{err4*}
\mathbf{E}=
X^{\frac{\ell}{r}+\varepsilon}
+X^{\frac{\ell}{r}+\frac{1}{s}-
\left(\frac{\ell-2^{r-1}}{2r(r-1)}+\frac{1}{2s(s-1)}\right)\theta+\varepsilon}
+X^{\frac{\ell-1}{r}+\frac{1}{s}+\varepsilon}
+X^{\frac{\ell}{r}-\left(1-\frac{1}{s}\right)+\frac{k-1}{k+2}+\theta+\varepsilon}.
\eea
We choose
\bna
\frac{\ell}{r}+\frac{1}{s}-
\left(\frac{\ell-2^{r-1}}{2r(r-1)}+\frac{1}{2s(s-1)}\right)\theta=\frac{\ell}{r}-\left(1-\frac{1}{s}\right)+\frac{k-1}{k+2}+\theta,
\ena
i.e,
\bna
\theta=\frac{6\Big(r-1\Big)\Big(s-1\Big)rs}{\Big(k+2\Big)\Big(\big(r-1\big)r+s\big(s-1\big)\big(\ell-2^{r-1}+2(r-1)r\big)\Big)}:=\theta_{h}.
\ena
Then
\bna
E\ll X^{\frac{\ell}{r}+\frac{1}{s}-\delta_{k, r, s, \ell}+\varepsilon}
\ena
with
\bna
{\delta_{k, r, s, \ell}}=\begin{cases}
\min\left\{\frac{1}{r},\frac{1}{s},\left(\frac{\ell-2^{r-1}}{2r(r-1)}+\frac{1}{2s(s-1)}\right)\theta_h\right\},
& \text{ if } \theta_h\leq \theta_i,  \\
\\
\min\left\{\frac{1}{r},\frac{1}{s},\left(\frac{\ell-2^{r-1}}{2r(r-1)}+\frac{1}{2s(s-1)}\right)\theta_i\right\},
& \text{ if } \theta_h > \theta_i.
\end{cases}
\ena
We finish the proof of Proposition \ref{pro}.

\bigskip

{\small \textsc{Qingfeng Sun},
	\textsc{School of Mathematics and Statistics, Shandong University, Weihai,
		Weihai, Shandong 264209, China}\\
	\indent{\it E-mail address}: qfsun@sdu.edu.cn

\medskip

{\small \textsc{Chenhao Du},
	\textsc{School of Mathematics and Statistics, Shandong University, Weihai,
		Weihai, Shandong 264209, China}\\
	\indent{\it E-mail address}: chenhaodu@mail.sdu.edu.cn
	
\end{document}